\newtheorem{theorem}{Theorem}[section]
\newtheorem{lemma}[theorem]{Lemma}
\newtheorem{proposition}[theorem]{Proposition}
\newtheorem{corollary}[theorem]{Corollary}
\newtheorem{definition}[theorem]{Definition}
\newtheorem{definitions}[theorem]{Definitions}
\theoremstyle{definition}
\newtheorem{example}[theorem]{Example}
\newtheorem{remark}[theorem]{Remark}
\begin{document}
	\title[]{Barreled extended locally convex spaces and Uniform boundedness principle}
	\author{Akshay Kumar \and Varun Jindal}
	\address{Akshay Kumar: Department of Mathematics, Malaviya National Institute of Technology Jaipur, Jaipur-302017, Rajasthan, India}
	\email{akshayjkm01@gmail.com}
		
	\address{Varun Jindal: Department of Mathematics, Malaviya National Institute of Technology Jaipur, Jaipur-302017, Rajasthan, India}
	\email{vjindal.maths@mnit.ac.in}

	\subjclass[2020]{Primary 46A03, 46A08; Secondary 46A17, 46A20, 54C40}	
	\keywords{ Extended locally convex space, extended normed space, finest locally convex topology,  barreled spaces, uniform boundedness principle, equicontinuous family}		
	\maketitle
\begin{abstract}	
	For an extended locally convex space $(X,\tau)$, in \cite{flctopology}, the authors studied the finest locally convex topology (flc topology) $\tau_F$ on $X$ coarser than $\tau$. One can often prove facts about $(X, \tau)$ by applying classical locally convex space theory on $(X, \tau_F)$.  This paper employs the flc topology to analyze barreled extended locally convex spaces and establish the uniform boundedness principle in the extended setting. One of the key results of this paper is the relationship between the barreledness of an extended locally convex space $(X,\tau)$ and the barreledness of the associated finest locally convex space $(X, \tau_F)$.  This is achieved by examining the lower semi-continuous seminorms on these spaces.

\end{abstract}
\section{Introduction}

Beer \cite{nwiv} initiated the study of extended normed linear spaces. Beer and Vanderwerff further developed these spaces in \cite{socsiens, spoens}.  Motivated by the work of Beer and Vanderwerff, Salas and Tapia-Garc{\'\i}a introduced the concept of an extended locally convex space (elcs) in \cite{esaetvs}.  
The structure of these new spaces differs significantly from that of classical locally convex spaces. In \cite{flctopology}, authors studied the finest locally convex topology (flc topology, for short) $\tau_{F}$ for an extended locally convex space $(X, \tau)$ that is still coarser than $\tau$. It is shown that both $(X, \tau)$ and $(X, \tau_F)$ have the same collection of continuous seminorms. In particular, both have the same dual. We call the space $(X,\tau_F)$ the finest locally convex space associated with the elcs $(X,\tau)$. 

In this paper, we define and investigate barreled extended locally convex spaces. The associated finest locally convex space plays a significant role in this investigation. To find the relationship between the barreledness of an elcs $(X,\tau)$ and the barreledness of the corresponding finest space $(X, \tau_F)$, we first examine lower semi-continuous seminorms on these spaces. Finally, by using the relationship between the barreledness of $(X,\tau)$ and $(X, \tau_F)$, we prove the uniform boundedness principle for the elcs $(X,\tau)$.


 
The entire work of this paper is presented in five sections. Section 2 provides preliminary definitions and features of extended locally convex spaces. In the third section, we study barreled extended locally convex spaces. We begin our investigation by characterizing the barreled subspaces of an elcs. Then as a main result, we show that an elcs $(X,\tau)$ is barreled if and only if the finest space $(X,\tau_F)$ is barreled. We use this result to prove some permanence properties of a barreled elcs.


In the fourth section, we look at the barreledness of the space $C(X)$ endowed with the topology $\tau_{\mathcal{B}}^s$ (or $\tau_{\mathcal{B}}$) of strong uniform convergence (or uniform convergence) on the elements of a bornology $\mathcal{B}$. We show that the bornology $\mathcal{K}$ of all relatively compact sets is coarser than $\mathcal{B}$ if $(C(X), \tau_{\mathcal{B}}^s)$ is barreled.  In the final section, we examine equicontinuous families of continuous linear operators between two extended locally convex spaces, and prove the uniform boundedness principle in the context of an elcs.

\section{Preliminaries}
Throughout the paper, we assume that the underlying field for a vector space is $\mathbb{K}$ which is either $\mathbb{R}$ or $\mathbb{C}$. We also adopt the following conventions for the $\infty$: $\infty.0=0.\infty=0$; $\infty+\alpha=\alpha+\infty=\infty$ for every $\alpha\in\mathbb{R}$; $\infty.\alpha=\alpha.\infty=\infty$ for $\alpha>0$; $\inf\{\emptyset\}=\infty$.  

An extended seminorm $\rho:X\to[0, \infty]$ on a vector space $X$ is a function which satisfies the following properties.
\begin{itemize}
	\item[(1)] $\rho(\alpha x)=|\alpha|\rho(x)$ for each scalar $\alpha$ and $x\in X$;
	\item[(2)] $\rho(x+y)\leq \rho(x)+\rho(y)$ for $x,y\in X$.\end{itemize}

An extended seminorm $\parallel\cdot\parallel:X\to[0, \infty]$ is  said to be an extended norm if $\parallel x\parallel=0$ implies $x=0$. A vector space $X$ together with an extended norm $\parallel\cdot\parallel$ is called an \textit{extended normed linear space} (enls, for short), and it is denoted by $(X, \parallel\cdot\parallel)$. In this situation, $(X, \parallel\cdot\parallel)$ is said to be an \textit{extended Banach space} if it is complete with respect to the metric $d(x, y)= \min\{\parallel x-y\parallel, 1\}$ for $x, y\in X$. It is easy to prove that an enls $(X, \parallel\cdot\parallel)$ is an extended Banach space if and only if the \textit{finite subspace} $(X_{fin}, \parallel\cdot\parallel)$ is a Banach space, where $X_{fin}=\{x\in X:\parallel x\parallel<\infty\}$. For more details about extended normed linear spaces, we refer to \cite{nwiv, socsiens, spoens}.  

A vector space $X$ endowed with a Hausdorff topology $\tau$ is said to be an \textit{extended locally convex space} (elcs, for short) if $\tau$ is induced by a collection $\mathcal{P}=\{\rho_i:i\in\mathcal{I}\}$ of extended seminorms on $X$, that is, $\tau$ is the smallest topology on $X$ under which each $\rho_i$ is continuous. The \textit{finite subspace} $X_{fin}$ of an elcs $(X, \tau)$ is defined by $$X_{fin}=\{x\in X:\rho(x)<\infty \text{ for every continuous extended seminorm } \rho \text{ on } X\},$$ and we define  $X_{fin}^\rho=\{x\in X:\rho(x)<\infty\}$ for any extended seminorm $\rho$ on $X$.  

Suppose $(X, \tau)$ is an elcs and $\tau$ is induced by a family $\mathcal{P}$ of extended seminorms on $X$. Then the following facts are either easy to verify or given in \cite{esaetvs}. 
\begin{itemize}
	\item[(1)] There exists a neighborhood base $\mathcal{B}$ of $0$ in $(X, \tau)$ such that each element of $\mathcal{B}$ is absolutely convex (balanced and convex);
	\item[(2)] $X_{fin}$ with the subspace topology is a locally convex space;
	\item[(3)] $X_{fin}=\bigcap_{\rho\in\mathcal{P}} X_{fin}^\rho;$
	\item[(4)] if $\rho$ is any continuous extended seminorm on $(X, \tau)$, then $X_{fin}^\rho$ is a clopen subspace of $(X, \tau)$;
	\item[(5)] a subspace $Y$ is open in $(X, \tau)$ if and only if there exists a continuous extended seminorm $\rho$ on $(X, \tau)$ such that $X_{fin}^\rho\subseteq Y$. 
	\item[(6)]  $X_{fin}$ is an open subspace in $(X, \tau)$ if and only if there exists a continuous extended seminorm $\rho$ on $(X, \tau)$ such that $X_{fin}=X_{fin}^\rho$. In this case, we say $(X, \tau)$ is a \textit{fundamental elcs}. \end{itemize}

\begin{definition}\label{Minkowski funcitonal}	{\normalfont(\cite{esaetvs})}	
	\normalfont	Suppose $U$ is any absolutely convex set in an elcs $(X, \tau)$. Then the \textit{Minkowski functional} $\rho_U:X\rightarrow[0,\infty]$  for $U$  is defined as  $$\rho_U(x)=\inf\{\lambda>0: x\in \lambda U\}.$$
\end{definition}

The following facts are immediate from the above definition.
\begin{enumerate}
	\item The Minkowski functional $\rho_{U}$  for the set $U$ is an extended seminorm on $X$. In addition, if  $U$ is absorbing, then $\rho_U$ is a seminorm on $X$.
	\item  $\{x\in X:\rho_U(x)<1\}\subseteq U\subseteq \{x\in X:\rho_U(x)\leq 1\}$.
	\item  The Minkowski functional $\rho_U$ is  continuous on $X$ if and only if $U$ is a neighborhood of $0$.
\end{enumerate}

For an absolutely convex subset $U$ of an elcs $X$, we define  $X_{fin}^U=\{x\in X: \rho_U(x)<\infty\}$. It is shown in \cite{esaetvs} that if $\mathcal{B}$ is a neighborhood base of $0$ in an elcs $(X, \tau)$ consisting of absolutely convex sets, then $\tau$ is induced by the collection $\{\rho_{U}: U\in\mathcal{B}\}$. 

If $A$ is any nonempty set in a topological space $(X,\tau)$, then we denote the closure and interior of $A$ in $(X,\tau)$ by $\text{Cl}_\tau(A)$  and  $\text{int}_\tau(A)$, respectively. For other terms and definitions, we refer to \cite{lcsosborne, tvsschaefer, willard}.

\section{Barreled extended locally convex spaces}

We define and explore barreled extended locally convex spaces in this section. On an elcs, we also look into barreled subspaces and lower semicontinuous seminorms. We demonstrate, among other things, that an elcs $(X, \tau)$ is barreled if and only if $(X,\tau_F)$ is barreled if and only if every lower semi-continuous seminorm on $(X, \tau)$ is continuous.  At the end of this section, we examine some permanence properties of a barreled elcs.

\begin{definitions}\label{definition of barreled space}\normalfont A \textit{barrel} $B$ in an elcs $(X, \tau)$ is a closed, absolutely convex and absorbing set in $(X,\tau)$. We say $(X, \tau)$ is \textit{barreled} if every barrel in it is a neighborhood of $0$.\end{definitions}

We first give an example of a barreled elcs. 

\begin{example}\label{barreledness of a discrete space} Let $X$ be a vector space together with the discrete extended norm\[ \parallel x\parallel_{0, \infty}=
	\begin{cases}
	\text{0;} &\quad\text{if $x=0 $}\\
	\text{$\infty$;} &\quad\text{if $x\neq 0.$ }\\
	\end{cases}\] Then every subset of $X$ is open in the enls  $(X, \parallel\cdot\parallel_{0, \infty})$. Therefore every barrel in $(X, \parallel\cdot\parallel_{0, \infty})$ is a neighborhood of $0$. Hence $(X, \parallel\cdot\parallel_{0, \infty})$ is barreled.\end{example}


\begin{theorem}\label{barreledness of a baire space} Suppose $(X,\tau)$ is an elcs which is a Baire space. Then $(X, \tau)$ is barreled.\end{theorem}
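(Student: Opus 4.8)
The plan is to adapt the classical argument that a Baire locally convex space is barreled, taking care that the only structural facts it uses survive in the extended setting. Let $B$ be a barrel in $(X,\tau)$, so that $B$ is closed, absolutely convex, and absorbing. Since $B$ is absorbing and balanced, for each $x\in X$ there is $t>0$ with $x\in tB$, and because $B$ is convex with $0\in B$ one has $sB\subseteq s'B$ whenever $0<s\le s'$; hence $x\in nB$ for all sufficiently large integers $n$, and therefore $X=\bigcup_{n=1}^\infty nB$.

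Next I would invoke the Baire property. Each set $nB$ is closed, being the image of the closed set $B$ under the map $x\mapsto nx$, which is a homeomorphism of $(X,\tau)$. As $X$ is a Baire space and $X=\bigcup_{n=1}^\infty nB$, some $n_0B$ must have nonempty interior; otherwise every $nB$ would be a closed set with empty interior, hence nowhere dense, and their union could not be all of $X$. Applying the homeomorphism $x\mapsto n_0^{-1}x$ to $\mathrm{int}_\tau(n_0B)$ then shows that $\mathrm{int}_\tau(B)\neq\emptyset$, say $x_0\in V\subseteq B$ with $V$ open.

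To finish I would push this interior point to the origin using absolute convexity. Since $B$ is balanced, $-x_0\in B$, and the set $\tfrac12(V-x_0)$ is an open neighborhood of $0$, translation and multiplication by the fixed nonzero scalar $\tfrac12$ being homeomorphisms. For $z=\tfrac12(v-x_0)$ with $v\in V\subseteq B$ we have $z=\tfrac12 v+\tfrac12(-x_0)\in B$ by convexity, so $\tfrac12(V-x_0)\subseteq B$ and $B$ is a neighborhood of $0$. The one place that genuinely needs the extended viewpoint is the repeated use of translations and fixed-scalar multiplications as homeomorphisms: unlike in a general topological vector space, scalar multiplication $\mathbb{K}\times X\to X$ need not be jointly continuous on an elcs, as it already fails for the discrete extended norm of Example \ref{barreledness of a discrete space}. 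I therefore expect the main point to check is that each individual map $x\mapsto\alpha x$ with $\alpha\neq0$ and each translation $x\mapsto x+a$ is nonetheless a homeomorphism; this follows directly from $\rho(\alpha x)=|\alpha|\rho(x)$ and the subadditivity of the inducing extended seminorms, after which the Baire category and convexity steps go through exactly as in the classical case.
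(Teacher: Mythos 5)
Your proof is correct and follows essentially the same route as the paper's: cover $X$ by the closed sets $nB$, use the Baire property to find an interior point of some $n_0B$, and then use balancedness and convexity to produce a neighborhood of $0$ inside $B$ (the paper does this in one step via $\frac{-x}{2n_0}+\frac{U}{2n_0}\subseteq B$, while you first rescale to get an interior point of $B$ itself). Your explicit verification that translations and fixed nonzero dilations remain homeomorphisms in an elcs—despite the failure of joint continuity of scalar multiplication—is a point the paper leaves implicit, but it does not change the argument.
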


\begin{proof}Let $B$ be a barrel in $(X, \tau)$. Then $\displaystyle{X=\cup_{n\in\mathbb{N}}}nB$ as $B$ is absorbing in $X$. Since $X$ is a Baire space, $\text{ int}(n_0B)\neq \emptyset$ for some $n_0\in\mathbb{N}$. Let $x\in\text{ int }(n_0B)$. Then there exists a neighborhood $U$ of $x$ such that $U\subseteq n_0B$. Note that $\frac{(-x)}{2n_0}+\frac{U}{2n_0}$ is a neighborhood of $0$ and $$\frac{(-x)}{2n_0}+\frac{U}{2n_0}\subseteq B.$$  Thus $B$ is a neighborhood of $0$. Which completes the proof.  \end{proof}

\begin{corollary}\label{barreledness of an extended Banach space} If $(X,\parallel\cdot\parallel)$ is an extended Banach space, then $(X,\parallel\cdot\parallel)$ is barreled.\end{corollary}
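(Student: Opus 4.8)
The plan is to deduce this immediately from Theorem~\ref{barreledness of a baire space} by showing that an extended Banach space, regarded as a topological space, is a Baire space. Recall from the preliminaries that $(X, \parallel\cdot\parallel)$ is an extended Banach space precisely when it is complete with respect to the metric $d(x, y)=\min\{\parallel x-y\parallel, 1\}$. My first step is to verify that this metric induces exactly the extended norm topology $\tau$ on $X$, that is, the topology generated by the single extended seminorm $\parallel\cdot\parallel$.

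This verification is routine but is the one point deserving care. For $0<\varepsilon\leq 1$ the open $d$-ball $\{x: d(x, 0)<\varepsilon\}$ coincides with the basic $\parallel\cdot\parallel$-neighborhood $\{x: \parallel x\parallel<\varepsilon\}$, since $\parallel x\parallel<\varepsilon\leq 1$ forces $\min\{\parallel x\parallel, 1\}=\parallel x\parallel$, while $\parallel x\parallel\geq 1$ forces $\min\{\parallel x\parallel, 1\}=1\geq\varepsilon$. Hence the two topologies share a neighborhood base of $0$, and as both $d$ and $\tau$ are translation invariant, they agree at every point and therefore coincide.

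Once the topologies are identified, completeness of $(X, d)$ together with the Baire category theorem for complete metric spaces shows that $(X, \tau)$ is a Baire space. Theorem~\ref{barreledness of a baire space} then applies directly and yields that $(X, \parallel\cdot\parallel)$ is barreled. I expect no genuine obstacle in this argument; the only step requiring attention is the identification of the truncated metric $d$ with the extended norm topology, which is exactly what guarantees that the completeness hypothesis transfers to the Baire property of the space to which Theorem~\ref{barreledness of a baire space} is applied.
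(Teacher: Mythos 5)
Your proposal is correct and is exactly the route the paper intends: the corollary is stated as an immediate consequence of Theorem~\ref{barreledness of a baire space}, with the (unstated) justification being precisely that completeness of the truncated metric $d(x,y)=\min\{\parallel x-y\parallel,1\}$ makes $(X,\parallel\cdot\parallel)$ a Baire space. Your careful verification that the $d$-balls of radius $\varepsilon\leq 1$ agree with the extended-norm balls, so that the Baire property holds for the extended norm topology itself, is the one detail the paper leaves implicit, and you handle it correctly.
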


\begin{corollary}\label{barreledness of a uniform space} Let $(X, d)$ be a metric space. Then  the collection $C(X)$ of all real-valued continuous functions on $X$ endowed with the extended norm $\parallel f\parallel_\infty=\sup_{x\in X}|f(x)|$ for $f\in C(X)$ is a barreled enls.\end{corollary}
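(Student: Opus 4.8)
The plan is to bypass a direct verification of the barrel condition and instead invoke the already-established Corollary~\ref{barreledness of an extended Banach space}: it suffices to show that $(C(X), \parallel\cdot\parallel_\infty)$ is an extended Banach space. First I would check that $\parallel\cdot\parallel_\infty$ is genuinely an extended norm on $C(X)$. Homogeneity and the triangle inequality for the supremum are immediate, so it is an extended seminorm; and $\parallel f\parallel_\infty=\sup_{x\in X}|f(x)|=0$ forces $f\equiv 0$, giving the norm property. Since $X$ need not be compact, continuous functions may be unbounded, so $\parallel\cdot\parallel_\infty$ legitimately attains the value $\infty$ and we are in the extended (not classical) setting.

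The decisive reduction comes from the criterion recalled in the preliminaries: an enls $(X,\parallel\cdot\parallel)$ is an extended Banach space if and only if its finite subspace $(X_{fin},\parallel\cdot\parallel)$ is a Banach space. Here the finite subspace is
\[
C(X)_{fin}=\{f\in C(X):\parallel f\parallel_\infty<\infty\}=C_b(X),
\]
the space of bounded continuous real-valued functions on $X$ equipped with the ordinary supremum norm. Thus the whole corollary is reduced to the statement that $(C_b(X),\parallel\cdot\parallel_\infty)$ is complete.

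The completeness of $C_b(X)$ is classical, so I would just record the standard argument: given a $\parallel\cdot\parallel_\infty$-Cauchy sequence $(f_n)$ in $C_b(X)$, the estimate $|f_n(x)-f_m(x)|\le \parallel f_n-f_m\parallel_\infty$ shows $(f_n)$ is uniformly Cauchy, hence converges uniformly to a function $f$; the uniform limit of continuous functions is continuous, and $f$ is bounded because $\parallel f\parallel_\infty\le \parallel f-f_n\parallel_\infty+\parallel f_n\parallel_\infty<\infty$ for large $n$. Therefore $f\in C_b(X)$ and $f_n\to f$ in norm, so $C_b(X)$ is a Banach space.

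Combining these steps, $(C_b(X),\parallel\cdot\parallel_\infty)=\bigl(C(X)_{fin},\parallel\cdot\parallel_\infty\bigr)$ is a Banach space, so by the cited criterion $(C(X),\parallel\cdot\parallel_\infty)$ is an extended Banach space, and Corollary~\ref{barreledness of an extended Banach space} immediately yields that it is a barreled enls. I do not anticipate a genuine obstacle here: the only nonroutine ingredient is the completeness of $C_b(X)$, which is standard, and the real content of the corollary lies entirely in the reduction via the finite-subspace criterion together with the preceding corollary. The one point worth stating carefully is the identification $C(X)_{fin}=C_b(X)$, since it is this equality that lets the classical Banach-space fact do all the work.
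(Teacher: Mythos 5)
Your proposal is correct and follows the same route as the paper: the paper's proof is exactly the one-line observation that $(C(X), \parallel\cdot\parallel_\infty)$ is an extended Banach space, so Corollary~\ref{barreledness of an extended Banach space} applies. The only difference is that you spell out why it is an extended Banach space (via the finite-subspace criterion and the classical completeness of the bounded continuous functions), which the paper leaves implicit.
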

\begin{proof} It follows from the fact that $(C(X), \parallel \cdot\parallel_\infty)$ is an extended Banach space.\end{proof}

We say a subspace $Y$ of an elcs $(X, \tau)$ is barreled if $(Y, \tau|_Y)$ is barreled. To study the barreled subspaces of an elcs,  we need the following lemma. 

\begin{lemma}\label{absolutely convex sets in an elcs} Let $(X,\tau)$ be an elcs. Then the following properties hold for $A\subseteq X$. 
\begin{itemize}
	\item[(1)] If $A$ is convex, then $\text{Cl}_\tau(A)$ is convex.
	\item[(2)] If $A$ is balanced, then $\text{Cl}_\tau(A)$ is balanced.\end{itemize}\end{lemma}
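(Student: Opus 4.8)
The plan is to reduce both statements to the continuity of two operations that, although an elcs need not be a topological vector space, do remain continuous for $\tau$: the addition map $+\colon X\times X\to X$ and, for each fixed scalar $\lambda$, the dilation $m_\lambda\colon x\mapsto \lambda x$. I would first record that these maps are continuous. Continuity of $m_\lambda$ follows from the homogeneity $\rho(\lambda x)=|\lambda|\rho(x)$ of the extended seminorms inducing $\tau$ together with translation invariance of $\tau$; in fact $m_\lambda$ is a homeomorphism whenever $\lambda\neq 0$, with inverse $m_{1/\lambda}$. Continuity of addition follows from the existence of an absolutely convex (hence convex) neighborhood base $\mathcal{B}$ at $0$ given by fact~(1): for $U\in\mathcal{B}$ one has $\tfrac12 U+\tfrac12 U\subseteq U$, so the standard splitting argument at a general point $(x_0,y_0)$ applies. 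It is worth flagging at the outset that joint continuity of scalar multiplication genuinely fails on an elcs (as the discrete extended norm of Example~\ref{barreledness of a discrete space} shows), so I must avoid invoking any ``closure in a TVS'' black box and use only these two partial continuities.

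With these in hand, part~(1) is the standard argument. Fix $t\in[0,1]$; the cases $t\in\{0,1\}$ are immediate, so suppose $t\in(0,1)$ and set $\phi\colon X\times X\to X$, $\phi(x,y)=tx+(1-t)y$, which is continuous as the composite of $(x,y)\mapsto(tx,(1-t)y)$ with addition. Convexity of $A$ says exactly $\phi(A\times A)\subseteq A$. Using that closure commutes with finite products, so that $\text{Cl}_\tau(A)\times\text{Cl}_\tau(A)=\text{Cl}_{\tau\times\tau}(A\times A)$, and that a continuous map carries the closure of a set into the closure of its image, I would conclude
\[
\phi\big(\text{Cl}_\tau(A)\times \text{Cl}_\tau(A)\big)\subseteq \text{Cl}_\tau\big(\phi(A\times A)\big)\subseteq \text{Cl}_\tau(A),
\]
which is precisely convexity of $\text{Cl}_\tau(A)$.

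For part~(2), assume $A$ is balanced; if $A=\emptyset$ the claim is vacuous, so assume $A\neq\emptyset$, whence $0\in A\subseteq\text{Cl}_\tau(A)$. For a scalar $\alpha$ with $0<|\alpha|\le 1$ the map $m_\alpha$ is a homeomorphism, so
\[
\alpha\,\text{Cl}_\tau(A)=m_\alpha\big(\text{Cl}_\tau(A)\big)=\text{Cl}_\tau\big(m_\alpha(A)\big)=\text{Cl}_\tau(\alpha A)\subseteq \text{Cl}_\tau(A),
\]
the final inclusion holding because $A$ is balanced. The scalar $\alpha=0$ must be handled separately, since $m_0$ is not a homeomorphism: here $0\cdot\text{Cl}_\tau(A)=\{0\}\subseteq\text{Cl}_\tau(A)$. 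Combining the two cases gives $\alpha\,\text{Cl}_\tau(A)\subseteq\text{Cl}_\tau(A)$ for every $|\alpha|\le 1$, i.e. $\text{Cl}_\tau(A)$ is balanced.

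The only real point of care, and the step I expect to be the main obstacle, is the preliminary one: justifying continuity of addition and of the dilations directly from the extended-seminorm and neighborhood-base description of $\tau$, precisely because one cannot fall back on the full topological-vector-space toolkit in this setting. Once that is isolated and verified, both parts reduce to the routine ``continuous image of a closure'' computations above.
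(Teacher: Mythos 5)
Your proposal is correct and takes essentially the same route as the paper: the paper's proof is just the remark that the standard locally convex space arguments (Propositions 2.13 and 2.5(a) in \cite{lcsosborne}) carry over, and those arguments are precisely the ones you give --- reduce everything to continuity of addition and of the fixed-scalar dilations, which do survive in an elcs even though joint continuity of scalar multiplication fails. Your write-up merely makes explicit the verification that the paper leaves to the reader, so there is nothing further to flag.
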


\begin{proof} It is similar to the proofs of Proposition 2.13, p. 42, and Proposition 2.5(a), p. 37 in \cite{lcsosborne}.\end{proof}

\begin{proposition}\label{barrel in finite subspace of M} Let $M$ be a subspace of an elcs $(X, \tau)$. Suppose $\rho$ is a continuous extended seminorm on $(X, \tau)$. Then for every barrel  $B$ in $M\cap X_{fin}^\rho$, there exists a barrel $G$ in $M$ such that $G\cap\left( M\cap X_{fin}^\rho\right)=B$.\end{proposition}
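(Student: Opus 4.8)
The plan is to repair the obvious failed candidate $G=B$. Write $N:=M\cap X_{fin}^\rho$. By fact (4) in the preliminaries, $X_{fin}^\rho$ is a clopen subspace of $(X,\tau)$, so $N$ is a clopen subspace of $M$, and $B$ is by hypothesis a closed (in $N$), absolutely convex, absorbing (in $N$) set. The trouble with $G=B$ is purely in the absorbing requirement: since $B\subseteq N$ and $\lambda B\subseteq N$ for every scalar $\lambda$, the set $B$ cannot absorb any point of $M\setminus N$. Hence the target $G$ must ``stick out'' of $N$ in the directions transverse to $N$, yet without enlarging its trace on $N$ beyond $B$.

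The construction I would use is the following. Fix an \emph{algebraic} complement $W$ of $N$ in $M$, so that $M=N\oplus W$ with $N\cap W=\{0\}$, and set $G:=B+W$. I would then dispatch the three easy requirements quickly. First, $G$ is absolutely convex, being the sum of the absolutely convex set $B$ and the subspace $W$ (a sum of absolutely convex sets is absolutely convex). Second, $G$ is absorbing in $M$: writing any $m\in M$ as $m=n+w$ with $n\in N$, $w\in W$, absorbency of $B$ in $N$ gives $\lambda_0>0$ with $n\in sB$ for all $s\ge\lambda_0$, while $w\in sW$ for every $s>0$, so $m\in s(B+W)=sG$ for all $s\ge\lambda_0$. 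Third, and this is where the choice $M=N\oplus W$ pays off, the trace is exactly $B$: if $b+w\in N$ with $b\in B\subseteq N$ and $w\in W$, then $w=(b+w)-b\in N\cap W=\{0\}$, so $b+w=b\in B$; conversely $B=B+0\subseteq G$ and $B\subseteq N$, whence $G\cap N=B$.

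The main obstacle is the remaining requirement, that $G$ be \emph{closed} in $M$ (while keeping the trace exactly $B$). Here I would exploit the clopen structure. The cosets $\{w+N:w\in W\}$ form a partition of $M$ into clopen sets (distinct because $N\cap W=\{0\}$, each open because $N$ is open and translations are homeomorphisms of the elcs $(X,\tau)$, and then clopen since the complement of each is a union of the others). A short computation analogous to the trace computation shows $G\cap(w+N)=w+B$ for each $w\in W$. Since $B$ is closed in $N$ and translation by $w$ is a homeomorphism carrying $N$ onto $w+N$, the set $w+B$ is closed in $w+N$.

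To finish, I would invoke the general topological fact that a set meeting every block of a clopen partition in a relatively closed set is itself closed: indeed $M\setminus G=\bigcup_{w\in W}\bigl((w+N)\setminus(w+B)\bigr)$, and each $(w+N)\setminus(w+B)$ is open in the open set $w+N$, hence open in $M$, so the union $M\setminus G$ is open and $G$ is closed. This yields that $G$ is a barrel in $M$ with $G\cap(M\cap X_{fin}^\rho)=B$, as required. I expect the closedness step (and the parallel insistence on the trace being \emph{exactly} $B$) to be the only real content; the convexity and absorbency verifications are routine, and the algebraic-complement idea is precisely what simultaneously makes the trace exact and makes the coset-by-coset closedness argument go through.
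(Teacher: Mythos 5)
Your proof is correct, and it is built on the same construction as the paper's: fix an algebraic complement $W$ (the paper calls it $Z$) of $N=M\cap X_{fin}^\rho$ in $M$ and consider $B+W$. The only real divergence is in how closedness is handled. The paper defines $G:=\mathrm{Cl}_{\tau|_M}(B+Z)$, so closedness is automatic, and then must show the closure has not enlarged the trace: for $x_0\in G\cap N$, every neighborhood of $x_0$ in $N$ is a neighborhood in $M$ (openness of $N$), hence meets $B+Z$, and any such point already lies in $N\cap(B+Z)=B$, so $x_0\in\mathrm{Cl}_N(B)=B$. You instead prove that $B+W$ is \emph{already} closed in $M$, by partitioning $M$ into the clopen cosets $w+N$, computing $G\cap(w+N)=w+B$, and invoking the locality of closedness over an open cover. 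The two arguments use exactly the same two ingredients (the algebraic direct sum and the openness of $N$ in $M$), just packaged differently; yours yields the slightly stronger byproduct that the paper's closure operation is redundant, i.e.\ $\mathrm{Cl}_{\tau|_M}(B+Z)=B+Z$, while the paper's is marginally shorter since it never needs the coset partition. All your verifications (absolute convexity of $B+W$, absorbency, exactness of the trace via $N\cap W=\{0\}$) are sound and match the paper's.
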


\begin{proof} Suppose $Z$ is a subspace of $M$ such that $M=\left( M \cap X_{fin}^\rho\right) \oplus Z$. Let $B$ be a barrel in $M\cap X_{fin}^\rho$. Let $G= \text{Cl}_{\tau|_M}(B+Z)$. By Lemma \ref{absolutely convex sets in an elcs}, $G$ is absolutely convex. It is easy to see that if $x=x_1+x_2\in M$ for $x_1\in M \cap X_{fin}^\rho$ and $x_2\in Z$, then there exists a $c>0$ such that $c(x_1+x_2)\in B+Z$. Therefore $G$ is absorbing in $M$. Consequently, $G$ is a barrel in $M$.  Also, $B\subseteq G\cap \left( M\cap X_{fin}^\rho \right)$. For the reverse inclusion, let $x_0\in G\cap \left( M \cap X_{fin}^\rho\right)$. If $U$ is a neighborhood of $x_0$ in $M \cap X_{fin}^\rho$, then  $U$ is a  neighborhood of $x_0$ in $M$ as $M \cap X_{fin}^\rho$ is an open subspace of $M$. Therefore there is an $x\in M$ such that $x\in U\cap \left(B+Z \right)$. Consequently, $x\in U\cap B$. Since $B$ is closed in $M \cap X_{fin}^\rho$, $x_0\in B$. Hence $G\cap\left( M\cap X_{fin}^\rho\right)=B$.
\end{proof}	

%
%
%

\begin{corollary} Suppose $\rho$ is a continuous extended seminorm on an elcs $(X, \tau)$. Then for every barrel $B$ in $X_{fin}^\rho$, there exists a barrel $G$ in $X$ such that $G\cap\left(X_{fin}^\rho\right)=B$.
\end{corollary}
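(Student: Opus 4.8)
The plan is to read off this statement as the special case $M = X$ of Proposition~\ref{barrel in finite subspace of M}. Every vector space is a subspace of itself, so that proposition applies with $M$ replaced by $X$. In that case, since $X_{fin}^\rho \subseteq X$, the intersection appearing there collapses to $M \cap X_{fin}^\rho = X \cap X_{fin}^\rho = X_{fin}^\rho$. Consequently, for any barrel $B$ in $X_{fin}^\rho$, the proposition produces a barrel $G$ in $X$ with $G \cap X_{fin}^\rho = B$, which is precisely the claim. The only preliminary point worth confirming is that the hypotheses transfer correctly, namely that a barrel in the subspace $X_{fin}^\rho$ (with its relative topology) is exactly the notion Proposition~\ref{barrel in finite subspace of M} refers to when $M = X$; this is immediate from the definitions.

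If one prefers an unpackaged argument, I would transcribe the construction from the proof of the proposition verbatim: choose a subspace $Z$ with $X = X_{fin}^\rho \oplus Z$, set $G = \text{Cl}_\tau(B + Z)$, and then verify the three barrel conditions. Absolute convexity of $G$ follows from Lemma~\ref{absolutely convex sets in an elcs}, since $B + Z$ is absolutely convex and closures preserve this. For absorbency, any $x = x_1 + x_2$ with $x_1 \in X_{fin}^\rho$ and $x_2 \in Z$ is scaled into $B + Z$ for a suitable $c > 0$ (using that $B$ absorbs $x_1$ in $X_{fin}^\rho$). For the identity $G \cap X_{fin}^\rho = B$, the inclusion $B \subseteq G \cap X_{fin}^\rho$ is clear, and the reverse inclusion uses that $X_{fin}^\rho$ is a clopen, in particular open, subspace of $(X,\tau)$ by fact~(4) of Section~2, so that neighborhoods of a point of $X_{fin}^\rho$ in the subspace topology remain neighborhoods in $X$; closedness of $B$ in $X_{fin}^\rho$ then forces any such limit point into $B$.

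Since the corollary is a literal instance of the proposition, I anticipate no genuine obstacle; the natural and shortest route is simply to invoke Proposition~\ref{barrel in finite subspace of M} with $M = X$ rather than to rerun the construction.
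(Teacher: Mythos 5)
Your proposal is correct and matches the paper's intent exactly: the corollary is stated without proof precisely because it is the special case $M = X$ of Proposition~\ref{barrel in finite subspace of M}, which is the invocation you make. Your optional unpackaged argument also faithfully reproduces the proposition's own construction, so there is nothing to add.
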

\begin{theorem}\label{barreledness of finite subspace of M} Suppose $M$ is a subspace of an elcs $(X, \tau)$. Then the following statements are equivalent.	
\begin{itemize}
\item[(1)] $M$ is a barreled subspace of $X$;
\item[(2)] $M\cap X_{fin}^\rho$ is a barreled subspace of $X$ for every continuous extended seminorm $\rho$ on $X$;
\item[(3)] $M\cap X_{fin}^\rho$ is a barreled subspace of $X$ for some continuous extended seminorm $\rho$ on $X$.	\end{itemize}\end{theorem}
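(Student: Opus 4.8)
The plan is to establish the cyclic chain $(1)\Rightarrow(2)\Rightarrow(3)\Rightarrow(1)$, using Proposition \ref{barrel in finite subspace of M} together with fact (4) of Section 2, which tells us that $X_{fin}^\rho$ is a clopen subspace of $(X,\tau)$; consequently $M\cap X_{fin}^\rho$ is a clopen subspace of $M$. Two elementary subspace-topology facts will do most of the bookkeeping: intersecting a neighborhood of $0$ in $M$ with a subspace produces a neighborhood of $0$ in that subspace, and, conversely, if $Y$ is an \emph{open} subspace of $M$, then every neighborhood of $0$ in $Y$ is already a neighborhood of $0$ in $M$. The whole argument amounts to moving barrels and neighborhoods between $M$ and the open subspace $M\cap X_{fin}^\rho$ in the two possible directions.

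For $(1)\Rightarrow(2)$, I would fix a continuous extended seminorm $\rho$ and a barrel $B$ in $M\cap X_{fin}^\rho$. The point to watch is that $B$ is generally \emph{not} a barrel in $M$: it fails to be absorbing at vectors of $M$ lying outside $X_{fin}^\rho$, so one cannot apply barreledness of $M$ to $B$ directly. This is exactly what Proposition \ref{barrel in finite subspace of M} repairs, yielding a barrel $G$ in $M$ with $G\cap(M\cap X_{fin}^\rho)=B$. Since $M$ is barreled, $G$ is a neighborhood of $0$ in $M$, and intersecting with $M\cap X_{fin}^\rho$ shows that $B$ is a neighborhood of $0$ in $M\cap X_{fin}^\rho$. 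The implication $(2)\Rightarrow(3)$ is then immediate, as the zero extended seminorm is continuous and so there is always at least one continuous extended seminorm available to witness $(3)$.

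The reverse implication $(3)\Rightarrow(1)$ runs in the easy direction for barrels, and I would argue as follows. Suppose $M\cap X_{fin}^{\rho_0}$ is barreled for some continuous extended seminorm $\rho_0$, and let $B$ be a barrel in $M$. Then $B\cap(M\cap X_{fin}^{\rho_0})$ is absolutely convex, closed in $M\cap X_{fin}^{\rho_0}$ (a closed set of $M$ met with the subspace), and absorbing there — if $x\in M\cap X_{fin}^{\rho_0}$, absorbency of $B$ in $M$ gives $c>0$ with $cx\in B$, and $cx$ remains in the subspace — so it is a barrel in $M\cap X_{fin}^{\rho_0}$ and hence a neighborhood of $0$ there. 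Because $M\cap X_{fin}^{\rho_0}$ is an open subspace of $M$, this set is a neighborhood of $0$ in $M$, and since it is contained in $B$, the barrel $B$ is too. Thus $M$ is barreled.

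I expect the only genuine obstacle to be the asymmetry between the two nontrivial directions, and keeping the neighborhood transfer pointed the right way. Lifting a barrel from the open subspace up to a barrel in $M$ is not automatic and genuinely needs the extension supplied by Proposition \ref{barrel in finite subspace of M}, whereas restricting a barrel of $M$ down to the subspace preserves all three defining properties for free. The complementary observation — that openness of $M\cap X_{fin}^\rho$ in $M$ is what lets a subspace neighborhood of $0$ be promoted to an ambient one — is the single place where fact (4) is indispensable, and it is what makes $(3)\Rightarrow(1)$ close the loop.
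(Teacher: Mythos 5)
Your proposal is correct and takes essentially the same route as the paper's own proof: $(1)\Rightarrow(2)$ by lifting the barrel via Proposition \ref{barrel in finite subspace of M}, $(2)\Rightarrow(3)$ trivially, and $(3)\Rightarrow(1)$ by restricting a barrel of $M$ to the clopen subspace $M\cap X_{fin}^\rho$, where it remains a barrel, and then using openness of that subspace to promote the resulting neighborhood of $0$ back to $M$. Your added justifications (why restriction preserves absorbency, and where openness is indispensable) only make explicit what the paper leaves implicit.
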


\begin{proof} $(1)\Rightarrow (2)$. Suppose $\rho$ is a continuous extended seminorm on $(X, \tau)$ and $B$ is a barrel in  $M\cap X_{fin}^\rho$. By Proposition \ref{barrel in finite subspace of M}, there exists a barrel $G$ in $M$ such that $G\cap \left( M\cap X_{fin}^\rho\right) =B$. Since $M$ is a barreled subspace of $X$, $G$ is a neighborhood of $0$ in $M$. Therefore $B=G\cap\left( M\cap X_{fin}^\rho\right)$ is  a neighborhood of $0$ in $M\cap X_{fin}^\rho$. Hence $M\cap X_{fin}^\rho$ is a barreled subspace of $X$.
	
$(2)\Rightarrow (3)$. It is immediate.
	
$(3)\Rightarrow(1)$. Let $G$ be a barrel in $M$. Then $G\cap \left( M\cap X_{fin}^\rho\right)$ is closed  in $M\cap X_{fin}^\rho$. Note that $G\cap \left( M\cap X_{fin}^\rho\right)$ is absolutely convex and absorbing in $M\cap X_{fin}^\rho$. Therefore $G\cap \left( M\cap X_{fin}^\rho\right)$ is a barrel in $M\cap X_{fin}^\rho$. As $M\cap X_{fin}^\rho$ is barreled,  $G\cap \left( M\cap X_{fin}^\rho\right)$ is a neighborhood of $0$ in $M\cap X_{fin}^\rho$. Consequently, $G$ is a neighborhood of $0$ in $M$. Which completes the proof.\end{proof}

\begin{corollary}\label{barreledness of finite subspace of an elcs X}	Suppose $(X,\tau)$ is an elcs. Then the following statements are equivalent.
\begin{itemize}
\item[(1)] $X$ is barreled;
\item[(2)] $X_{fin}^\rho$ is a barreled  subspace of $X$ for every continuous extended seminorm $\rho$ on $X$;
\item[(3)] $X_{fin}^\rho$ is a barreled  subspace of $X$ for some continuous extended seminorm $\rho$ on $X$.	\end{itemize}\end{corollary}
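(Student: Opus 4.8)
The plan is to obtain this corollary as the special case $M = X$ of Theorem \ref{barreledness of finite subspace of M}. First I would observe that when $M = X$ the subspace topology $\tau|_X$ coincides with $\tau$, so the phrase ``$M$ is a barreled subspace of $X$'' in item (1) of the theorem reduces exactly to ``$X$ is barreled,'' which is item (1) of the corollary. Second, since $X \cap X_{fin}^\rho = X_{fin}^\rho$ for every extended seminorm $\rho$, the sets appearing in items (2) and (3) of the theorem become precisely $X_{fin}^\rho$, matching items (2) and (3) of the corollary verbatim. Thus each of the three statements in the corollary is literally the corresponding statement of Theorem \ref{barreledness of finite subspace of M} with $M$ taken to be $X$, and the asserted equivalence is immediate.

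Since no genuine work remains beyond this substitution, there is no real obstacle here; the only points worth recording are the harmless set-theoretic identity $X \cap X_{fin}^\rho = X_{fin}^\rho$ and the remark that the continuity hypothesis on $\rho$ is stated relative to $(X,\tau)$ in both places, so nothing changes. In other words, the entire proof is the single sentence ``apply Theorem \ref{barreledness of finite subspace of M} with $M = X$.''

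If instead one wanted a self-contained argument, one could simply rerun the proof of Theorem \ref{barreledness of finite subspace of M} in the case $M=X$: the implication $(1)\Rightarrow(2)$ would invoke the corollary following Proposition \ref{barrel in finite subspace of M} to lift a barrel of $X_{fin}^\rho$ to a barrel $G$ of $X$ whose intersection with $X_{fin}^\rho$ recovers the original barrel, while $(3)\Rightarrow(1)$ would intersect an arbitrary barrel of $X$ with the clopen subspace $X_{fin}^\rho$ and use that $X_{fin}^\rho$ is open to conclude that a neighborhood of $0$ in $X_{fin}^\rho$ is a neighborhood of $0$ in $X$. However, specializing the already-proved theorem is the cleaner route and the one I would take.
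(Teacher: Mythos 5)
Your proof is correct and is exactly the intended argument: the paper states this corollary immediately after Theorem \ref{barreledness of finite subspace of M} with no separate proof, precisely because it is the special case $M = X$ (using $X \cap X_{fin}^\rho = X_{fin}^\rho$), which is what you do. Nothing further is needed.
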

	
\begin{corollary}\label{barreledness of finite subspace of an enls X} Suppose $(X, \parallel\cdot\parallel)$ is an enls. Then $X$ is barreled if and only if $(X_{fin}, \parallel\cdot\parallel)$ is barreled.\end{corollary}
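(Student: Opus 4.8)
The plan is to deduce this directly from Corollary~\ref{barreledness of finite subspace of an elcs X}, applied to the single extended norm that induces the topology. First I would observe that the topology of an enls $(X,\parallel\cdot\parallel)$ is, by definition, induced by the family $\mathcal{P}=\{\parallel\cdot\parallel\}$ consisting of the single extended seminorm $\parallel\cdot\parallel$, which is of course continuous. By fact~(3) of Section~2, $X_{fin}=\bigcap_{\rho\in\mathcal{P}}X_{fin}^\rho$, and since $\mathcal{P}$ has only the one member $\parallel\cdot\parallel$, this gives $X_{fin}=X_{fin}^{\parallel\cdot\parallel}$. Thus the finite subspace $X_{fin}$ coincides with $X_{fin}^\rho$ for the particular choice $\rho=\parallel\cdot\parallel$.

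Next I would invoke the equivalence of statements $(1)$ and $(3)$ in Corollary~\ref{barreledness of finite subspace of an elcs X}, taking the continuous extended seminorm there to be $\parallel\cdot\parallel$ itself. Since statement $(3)$ asks only for the existence of \emph{some} continuous extended seminorm with the barreledness property, this single choice suffices: it immediately yields that $X$ is barreled if and only if $X_{fin}^{\parallel\cdot\parallel}=X_{fin}$ is a barreled subspace of $X$.

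Finally I would identify the relevant topology to match the statement. The subspace topology $\tau|_{X_{fin}}$ inherited from $(X,\parallel\cdot\parallel)$ is precisely the topology induced by the restriction of $\parallel\cdot\parallel$ to $X_{fin}$, and because $\parallel x\parallel<\infty$ for every $x\in X_{fin}$, this restriction is an ordinary (finite-valued) norm. Hence $X_{fin}$ being a barreled subspace of $X$ is exactly the assertion that the normed space $(X_{fin},\parallel\cdot\parallel)$ is barreled, which is the desired conclusion. There is no substantive obstacle in this argument, as all the genuine work resides in Corollary~\ref{barreledness of finite subspace of an elcs X}; the only points needing care are the bookkeeping identification $X_{fin}^{\parallel\cdot\parallel}=X_{fin}$ and the routine check that the subspace and norm topologies on $X_{fin}$ agree, both immediate from the definitions.
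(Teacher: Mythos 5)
Your proposal is correct and follows exactly the route the paper intends: the corollary is stated without proof precisely because it is the specialization of Corollary~\ref{barreledness of finite subspace of an elcs X} (equivalence of (1) and (3)) to the single inducing extended seminorm $\parallel\cdot\parallel$, together with the identification $X_{fin}=X_{fin}^{\parallel\cdot\parallel}$ and the observation that the subspace topology on $X_{fin}$ is the one given by the restricted (finite-valued) norm. Your bookkeeping of these identifications is accurate, so there is nothing to add.
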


In general, we may not replace $X_{fin}^\rho$ by $X_{fin}$ in Theorem \ref{barreledness of finite subspace of M}.

\begin{example}	Let $X=c_{00}$ be the collection of all real sequences which are eventually zero. For $n\in\mathbb{N}$, consider the extended norm $\rho_n:X\rightarrow [0,\infty]$ defined by 	\[\rho_n((x^m))=
	\begin{cases}
		\text{$\infty$,} &\quad\text{ if $x^m\neq 0$  for some $1\leq m\leq n$}\\
		\text{$\displaystyle{\sup_{m\in\mathbb{N}}}|x^m|$,} &\quad\text{if $x^m= 0$ for every $1\leq m\leq n$.}\\
	\end{cases}
	\] Let $\tau$ be the topology induced by the collection $\mathcal{P}=\{\rho_n:n\in\mathbb{N}\}$. Then $(X, \tau)$ is an elcs and $X_{fin}=\{0\}$. Therefore $X_{fin}$ is a barreled subspace of $(X, \tau)$. Now, suppose $B=\{(x^m)\in X: |x^m|\leq \frac{1}{m} \text{ for } m\in\mathbb{N} \}$. Then $B$ is absolutely convex. If $x=(x^m)$ is any element in $X$, then there exists an $m_0\in\mathbb{N}$ such that $x^m=0$ for $m\geq m_0$. Consequently, $c.x\in B$ for $c= \frac{1}{m_0 \max\{|x^m|+1: m\in\mathbb{N}\}}$. Thus $B$ is absorbing in $X$. Let $(z_n)$ be a sequence in $B$ such that $z_n=(z_n^m)$ for each $n\in\mathbb{N}$. If $z_n\to z=(z^m)$ in $X$, then for every $m\in\mathbb{N}$, $z_n^m\to z^m$ in $\mathbb{R}$. Therefore $|z^m|\leq \frac{1}{m}$ for every $m\in\mathbb{N}$. Consequently, $z\in B$. Note that $(X, \tau)$ is a metrizable elcs as $\tau$ is induced by a countable family of elcs (see, Lemma 4.15 in \cite{esaetvs}). Thus $B$ is closed in $X$. Hence $B$ is a barrel in $X$. If $B$ is a neighborhood of $0$ in $(X, \tau)$, then $\{x\in X: \rho_{n_0}(x)<\epsilon\}\subseteq B$ for some  $n_0\in\mathbb{N}$ and $\epsilon>0$. Suppose $k\in\mathbb{N}$ such that $\frac{1}{k}<\min\{\frac{1}{n_0}, ~\frac{\epsilon}{2}\}$. Take $y=(y^m)$ defined by
	\[y^m=
	\begin{cases}
		\text{$\frac{\epsilon}{2}$;} &\quad\text{ for $m=k$}\\
		\text{0;} &\quad\text{ otherwise. }\\
	\end{cases}\] Then $y\in\{x\in X: \rho_{n_0}(x)<\epsilon\}\setminus B$. Hence $(X, \tau)$ is not barreled.  \end{example}	

Our next aim is to relate the barreledness of an elcs $(X,\tau)$ with the barreledness of the finest space $(X,\tau_F)$. To do that, first we study lower semi-continuous seminorms on $(X,\tau)$. 

\begin{definitions}{\normalfont(\cite{willard})} \normalfont A function $f:(X, \tau)\to \mathbb{R}$ on a topological space $(X, \tau)$ is said to be  \textit{lower semi-continuous at $x_0\in X$} if for every $\epsilon>0$, $f^{-1}((f(x_0)-\epsilon, \infty))$ is a neighborhood of $x_0$ in $(X, \tau)$. The function $f$ is  said to be \textit{lower semi-continuous on $X$} if it is lower semi-continuous at every point of $X$.\end{definitions}

Note that a function $f:(X, \tau)\to \mathbb{R}$ on a topological space $(X, \tau)$ is lower semi-continuous on $X$ if and only if $f^{-1}((c, \infty))\in \tau$ for every $c\in \mathbb{R}$ if and only if $f^{-1}((-\infty, c])$ is closed in $(X, \tau)$ for every $c\in \mathbb{R}$.   

In \cite{flctopology}, it is shown that if $\tau_F$ is the flc topology for an elcs $(X,\tau)$, then both $(X,\tau)$ and $(X,\tau_F)$ have the same collection of continuous seminorms. Our next result shows that this is even true for lower semi-continuous seminorms.   
\begin{proposition}\label{lower semicotinuous seminorms} Let $P:X\rightarrow [0,\infty)$ be a seminorm on an elcs $(X, \tau)$. If $\tau_F$ is the flc topology for $(X, \tau)$, then $P$ is lower semi-continuous with respect to $\tau$ if and only if $P$ is lower semi-continuous with respect to $\tau_F$.\end{proposition}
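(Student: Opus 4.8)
The plan is to reduce the whole statement to a question about the unit ball of $P$. Since multiplication by a fixed nonzero scalar is a homeomorphism of $(X,\tau)$ and of $(X,\tau_F)$ (one has $\{|c|\rho<\epsilon\}=\{\rho<\epsilon/|c|\}$), a finite seminorm $P$ is lower semi-continuous for a given topology exactly when its unit ball $C=\{x:P(x)\le 1\}$ is closed for that topology: for $c>0$ one has $\{P\le c\}=cC$, and $\ker P=\bigcap_n \tfrac1n C$. Moreover $C$ is absolutely convex and, since $P$ is finite-valued, absorbing, so $C$ is precisely a barrel once it is closed. Thus the proposition is equivalent to the assertion that $C$ is $\tau$-closed if and only if it is $\tau_F$-closed. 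One implication is immediate: because $\tau_F\subseteq\tau$, every $\tau_F$-closed set is $\tau$-closed, which yields that $\tau_F$-lower semi-continuity implies $\tau$-lower semi-continuity. The entire content lies in the reverse implication.

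For the reverse implication I would pass through the dual description of lower semi-continuous seminorms. In the \emph{genuine} locally convex space $(X,\tau_F)$ the bipolar theorem gives $\text{Cl}_{\tau_F}(C)=C^{\circ\circ}$ for the absolutely convex set $C$, so $C$ is $\tau_F$-closed if and only if $C=C^{\circ\circ}$, i.e. if and only if $P=\sup\{|f|:f\in X',\ |f|\le P\}$. Writing $\widetilde P$ for this supremum, one always has $\widetilde P\le P$, and $\widetilde P$ is a supremum of $\tau_F$-continuous seminorms, hence $\tau_F$-lower semi-continuous. Because $(X,\tau)$ and $(X,\tau_F)$ have the same continuous seminorms and the same dual $X'$ (from \cite{flctopology}), the functionals and continuous seminorms available in the two settings are identical. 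Consequently it suffices to establish the elcs analogue of the classical representation: \emph{if $P$ is $\tau$-lower semi-continuous, then $P=\widetilde P$}, i.e. $P$ is the supremum of the $\tau$-continuous seminorms it dominates.

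To prove $P=\widetilde P$, fix $x_0$ and $\lambda>0$ with $P(x_0)>\lambda$; the goal is a $\tau$-continuous seminorm $Q\le P$ with $Q(x_0)>\lambda$. The set $\{P\le\lambda\}$ is $\tau$-closed, absolutely convex, absorbing and omits $x_0$, so $0$ is an internal point and the algebraic Hahn--Banach separation theorem produces a linear functional $f$ with $|f|\le P/\lambda$ (hence $\sup_{\{P\le\lambda\}}|f|\le 1$) and $|f(x_0)|>1$; the candidate is $Q=\lambda|f|$. The main obstacle is the continuity of $f$. Since $|f|\le P/\lambda$ and $P$ is only $\tau$-lower semi-continuous rather than $\tau$-continuous, domination by $P$ does not force continuity, and, unlike in a classical locally convex space, a $\tau$-neighborhood of $x_0$ separating it from $\{P\le\lambda\}$ need not be absorbing. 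This is exactly the point at which the elcs structure must intervene, and it is the step I expect to be hardest.

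To resolve it I would use the clopen finite subspaces. Choosing a continuous extended seminorm $\rho$ that witnesses $x_0\notin\text{Cl}_\tau\{P\le\lambda\}$, the subspace $X_{fin}^\rho$ is clopen and $\rho$ restricts there to a finite continuous seminorm, so the separation is effectively carried by a genuine seminorm on $X_{fin}^\rho$ while the complementary (extended) directions can be absorbed into the kernel of the functional being built. The crucial mechanism promoting boundedness to continuity is that fixed scalar multiples of an absolutely convex $\tau$-neighborhood are again $\tau$-neighborhoods: a functional bounded on such a neighborhood is therefore $\tau$-continuous, so the $Q$ obtained lies in the common supply of $\tau$- and $\tau_F$-continuous seminorms. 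Taking the supremum of these $Q$ over all $x_0$ recovers $P=\widetilde P$, which shows that $\tau$-lower semi-continuity implies $\tau_F$-lower semi-continuity and completes the equivalence.
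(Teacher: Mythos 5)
Your reduction to closedness of the sublevel sets, your observation that the direction from $\tau_F$ to $\tau$ is trivial, and your plan to prove $P=\widetilde P$ (so that $P$ becomes a supremum of $\tau_F$-continuous functions, hence $\tau_F$-lower semi-continuous) are all sound. The gap is exactly where you predicted it would be, and the resolution you sketch does not work. First, extending a functional built on $X_{fin}^\rho$ by zero along a complement $M$ (with $X=X_{fin}^\rho\oplus M$) destroys the domination $|f|\le P/\lambda$: for $x=x_f+x_m\in\{P\le\lambda\}$ one has $|f(x)|=|f(x_f)|$, and there is no control of $P(x_f)$ by $P(x_f+x_m)$, since $P$ has no reason to respect this direct sum decomposition; so $f$ need not lie in the polar of $\{P\le\lambda\}$, and $Q=\lambda|f|$ need not satisfy $Q\le P$. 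Second, the point $x_0$ being separated may itself satisfy $\rho(x_0)=\infty$, i.e.\ $x_0\notin X_{fin}^\rho$, so a separation ``carried by'' the finite seminorm $\rho|_{X_{fin}^\rho}$ never sees $x_0$ at all.

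The missing idea --- and it is the whole content of the paper's proof --- is that on the complement $M$ you must keep $P$ itself rather than kill it. The paper defines $\mu(x)=\rho(x_f)+P(x_m)$ for $x=x_f+x_m$ with $x_f\in X_{fin}^\rho$, $x_m\in M$. Then $\mu\le\rho$ pointwise (because $\rho(x)=\infty$ whenever $x_m\neq0$), so $\mu$ is a finite $\tau$-continuous seminorm, hence $\tau_F$-continuous by Theorem 3.5 of \cite{flctopology}; and if $\mu(y)<\min\{c,\epsilon/2\}$, then $\rho(y_f)<c$ gives $P(x_0+y_f)>P(x_0)-\epsilon/2$, while $P(y_m)<\epsilon/2$ and the triangle inequality give $P(x_0+y)\ge P(x_0+y_f)-P(y_m)>P(x_0)-\epsilon$. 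This directly exhibits a $\tau_F$-neighborhood of $x_0$ on which $P$ stays above $P(x_0)-\epsilon$, so no polars or Hahn--Banach are needed. If you prefer to keep your duality formulation, the same blending rescues it: with $C=\{P\le\lambda\}$ and a witness $(x_0+\{x:\rho(x)<c\})\cap C=\emptyset$, the set $D=C+\{x:\rho(x)<c/2\}$ is absolutely convex, absorbing (it contains $C$) and a $\tau$-neighborhood of $0$ (it contains $\{x:\rho(x)<c/2\}$), so its Minkowski functional $p_D$ is a finite $\tau$-continuous seminorm with $\lambda p_D\le P$ and $p_D(x_0)\ge1$; letting $\lambda\uparrow P(x_0)$ exhibits $P$ as a supremum of $\tau$-continuous (equivalently, $\tau_F$-continuous) seminorms. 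Either way, the fix is to combine $P$ (or its sublevel set) with the extended seminorm $\rho$, not to relegate the non-finite directions to a kernel.
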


\begin{proof} Suppose $P$ is lower semi-continuous  with respect to $\tau$. Then for $x_0\in X$ and $\epsilon>0$, there exists a continuous extended seminorm $\rho$ and $c>0$ such that $$P\left(x_0+\{x\in X:\rho(x)<c\}\right) \subseteq \left( P(x_0)-\frac{\epsilon}{2}, \infty\right).$$ 
	
Let $M$ be a  subspace of $X$ such that $X=X_{fin}^\rho\oplus M$. Consider the seminorm $\mu:X=X_{fin}^\rho\oplus M\rightarrow [0,\infty)$ defined by $$\mu(x)=\rho(x_f)+P(x_m),$$ where $x=x_f+x_m$ for $x_f\in X_{fin}^\rho$ and $x_m\in M$. Since $\mu(x)\leq \rho(x)$ for every $x\in X$, $\mu$ is  a continuous seminorm on $(X,\tau)$. By Theorem 3.5 in \cite{flctopology}, $\mu$ is continuous with respect to $\tau_F$. It is easy to prove that $$P\left(x_0+\left\lbrace x\in X:\mu(x)<\min\left\lbrace c, \frac{\epsilon}{2}\right\rbrace \right\rbrace \right)\subseteq \left( P(x_0)-\epsilon, \infty\right).$$ Therefore $P$ is lower semi-continuous at $x_0$ with respect to $\tau_F$. Consequently, $P$ is lower semi-continuous on $X$ with respect to $\tau_F$.\end{proof}

\begin{theorem}\label{barreledness of flc topology} Let $(X,\tau)$ be an elcs. If $\tau_F$ is the flc topology for $(X, \tau)$, then the following statements are equivalent.
\begin{itemize}
\item[(1)] $(X,\tau)$ is barreled.
\item[(2)] $(X,\tau_{F})$ is barreled.
\item[(3)]Every lower semi-continuous seminorm on $(X,\tau_F)$ is continuous.
\item[(4)]Every lower semi-continuous seminorm on $(X, \tau)$ is continuous. \end{itemize} \end{theorem}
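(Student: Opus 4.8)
The plan is to route everything through a single technical lemma together with the equivalence $(1)\Leftrightarrow(4)$, then to obtain $(2)\Leftrightarrow(3)$ for free by applying that same equivalence to the locally convex space $(X,\tau_F)$, and finally to bridge the two pairs using Proposition \ref{lower semicotinuous seminorms}. Concretely, the cycle I aim to close is $(1)\Leftrightarrow(4)$, then $(4)\Leftrightarrow(3)$, then $(3)\Leftrightarrow(2)$.

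First I would isolate the key lemma: in \emph{any} elcs $(X,\tau)$, if $B$ is a barrel then its Minkowski functional $\rho_B$ is a lower semi-continuous seminorm with respect to $\tau$. Since $B$ is absorbing, fact (1) after Definition \ref{Minkowski funcitonal} gives that $\rho_B$ is a (finite) seminorm, so it remains to show that $\{x:\rho_B(x)>c\}$ is open for every $c\ge 0$. Given $x_0$ with $\rho_B(x_0)>c$, I would pick $r$ with $c<r<\rho_B(x_0)$ and verify that $X\setminus rB$ is an open neighborhood of $x_0$ contained in $\{x:\rho_B(x)>c\}$: the dilate $rB$ is $\tau$-closed because multiplication by the fixed nonzero scalar $r$ is a homeomorphism of $(X,\tau)$; moreover $x_0\notin rB$, and the balancedness of $B$ forces $\rho_B\ge r>c$ off $rB$.

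With the lemma in hand, both halves of $(1)\Leftrightarrow(4)$ are short. For $(1)\Rightarrow(4)$, given a lower semi-continuous seminorm $P$ on $(X,\tau)$, the set $B=\{x:P(x)\le 1\}$ is $\tau$-closed (lower semi-continuity), absolutely convex, and absorbing (finiteness of $P$), hence a barrel; barreledness makes $B$ a neighborhood of $0$, and since $\rho_B=P$, fact (3) after Definition \ref{Minkowski funcitonal} yields continuity of $P$. For $(4)\Rightarrow(1)$, given a barrel $B$, the lemma shows that $\rho_B$ is a lower semi-continuous seminorm, so $(4)$ makes $\rho_B$ continuous, and then $\{x:\rho_B(x)<1\}\subseteq B$ exhibits $B$ as a neighborhood of $0$. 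Because $(X,\tau_F)$ is again an elcs (indeed a genuine locally convex space, whose continuous seminorms separate points exactly as for $\tau$), the very same equivalence $(1)\Leftrightarrow(4)$ read for $(X,\tau_F)$ is precisely $(2)\Leftrightarrow(3)$.

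It then remains to prove $(3)\Leftrightarrow(4)$, and this is where Proposition \ref{lower semicotinuous seminorms} does the work: a seminorm $P$ is lower semi-continuous for $\tau$ if and only if it is lower semi-continuous for $\tau_F$, while Theorem 3.5 of \cite{flctopology} gives that $P$ is $\tau$-continuous if and only if it is $\tau_F$-continuous. Since $(3)$ and $(4)$ are the implication ``lower semi-continuous $\Rightarrow$ continuous'' read off the same family of seminorms through these two dictionaries, they are equivalent. The main obstacle I anticipate is precisely the lemma: the classical proof that the Minkowski functional of a closed set is lower semi-continuous passes through $x/(c+\varepsilon)\to x/c$, which relies on joint continuity of scalar multiplication, a property an elcs need \emph{not} have, since for $x\notin X_{fin}$ the map $\alpha\mapsto\alpha x$ is typically discontinuous. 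The purpose of the strict-inequality argument above is to replace that limiting step by the closedness of the single dilate $rB$, which requires only continuity of multiplication by a \emph{fixed} scalar; and that weaker property does hold in every elcs.
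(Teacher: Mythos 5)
Your proof is correct, but it takes a genuinely different route from the paper's. The paper proves the directed cycle $(1)\Rightarrow(2)\Rightarrow(3)\Rightarrow(4)\Rightarrow(1)$: for $(1)\Rightarrow(2)$ it notes that a $\tau_F$-barrel is a $\tau$-barrel and transfers the neighborhood property back to $\tau_F$ via the Minkowski functional and Theorem 3.5 of \cite{flctopology}; for $(2)\Rightarrow(3)$ it cites the classical locally convex result (Theorem 11.4.3 in \cite{tvsnarici}); for $(3)\Rightarrow(4)$ it uses Proposition \ref{lower semicotinuous seminorms}; and for $(4)\Rightarrow(1)$ it shows $\rho_B$ is lower semi-continuous by a net argument, proving $\rho_{B}^{-1}\left((-\infty,c]\right)$ closed using the scalars $\frac{1}{c+\frac{1}{n}}$ --- which, exactly like your dilate argument, uses only continuity of multiplication by a fixed nonzero scalar. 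You instead establish the two-sided equivalence $(1)\Leftrightarrow(4)$ in an \emph{arbitrary} elcs, with the isolated lemma that the Minkowski functional of any barrel is lower semi-continuous (via the open set $X\setminus rB$ on which $\rho_B\ge r>c$), and then obtain $(2)\Leftrightarrow(3)$ by specializing that same equivalence to the elcs $(X,\tau_F)$; thus the Narici--Beckenstein theorem that the paper quotes as an input becomes a corollary of your lemma. Your bridge $(3)\Leftrightarrow(4)$ coincides with the paper's use of Proposition \ref{lower semicotinuous seminorms} together with the fact that $\tau$ and $\tau_F$ have the same continuous seminorms. What your route buys is self-containedness and modularity: one reusable lemma handles both the extended and the classical case, and your complement-of-a-dilate proof of lower semi-continuity is arguably cleaner than the net computation. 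What the paper's route buys is brevity (outsourcing $(2)\Rightarrow(3)$ to the literature) and a more direct geometric comparison of barrels in the two topologies in the step $(1)\Rightarrow(2)$. Both approaches turn on the same subtlety you correctly flag: joint continuity of scalar multiplication can fail in an elcs, so lower semi-continuity of $\rho_B$ must be derived from closedness of $B$ under fixed-scalar dilations alone.
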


\begin{proof}	$(1)\Rightarrow (2).$ Suppose $B$ is a barrel in $(X,\tau_{F})$. Then  $B$ is a barrel in $(X,\tau)$ as $\tau_F\subseteq \tau$. Since $(X,\tau)$ is barreled, $B$ is a neighborhood of $0$ in $(X,\tau)$. Then the Minkowski functional $\rho_B$ is a continuous seminorm on $(X,\tau)$. By Theorem 3.5 in \cite{flctopology}, $\rho_B$ is continuous with respect to  $\tau_{F}$. Hence $B$ is a neighborhood of $0$  in $(X,\tau_{F})$ as $\rho_B^{-1}([0, 1))\subseteq B$.
	
$(2)\Rightarrow(3)$. It follows from Theorem 11.4.3, p. 386 in \cite{tvsnarici}.
	
$(3)\Rightarrow(4)$. It follows from Proposition \ref{lower semicotinuous seminorms}. 

$(4)\Rightarrow(1)$. Suppose $B$ is a barrel in $(X,\tau)$. Then the Minkowski functional $\rho_B$ is a seminorm on $X$. For any $c> 0$, let $(x_\lambda)_{\lambda\in\Lambda}$ be a net in $\rho_{B}^{-1}\left( (-\infty, c]\right) $ such that $x_\lambda\to x$ in $(X, \tau)$. Then $\left(\frac{1}{c+\frac{1}{n}}\right)x_\lambda\in B$ for every $n\in\mathbb{N}$ and $\lambda\in\Lambda$. Note that for every $n\in\mathbb{N}$, $\left(\frac{1}{c+\frac{1}{n}}\right)x_\lambda\to\left(\frac{1}{c+\frac{1}{n}}\right)x$. Since $B$ is closed in $(X, \tau)$, $\left(\frac{1}{c+\frac{1}{n}}\right)x\in B$  for every $n\in \mathbb{N}$. Which implies that $\rho_B(x)\leq c+\frac{1}{n}$ for every $n\in\mathbb{N}$. Consequently, $\rho_B(x)\leq c$. Therefore $\rho_{B}^{-1}\left( (-\infty, c]\right) $ is closed in $(X, \tau)$. Hence $\rho_B$ is lower semi-continuous on $(X, \tau)$. By our assumption, $\rho_{B}$ is a continuous seminorm on $(X, \tau)$. Thus $B$ is a neighborhood of $0$ in $(X, \tau)$.
\end{proof}

\begin{corollary}\label{barreled implies mackey space} Let $(X,\tau)$ be a barreled elcs. If $\tau_{F}$ is the flc topology for $(X, \tau)$, then $(X,\tau_F) $ is a Mackey space.\end{corollary}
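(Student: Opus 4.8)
The plan is to reduce the statement entirely to the classical theory applied to the locally convex space $(X,\tau_F)$. First I would observe that, since $(X,\tau)$ is barreled, the equivalence $(1)\Leftrightarrow(2)$ of Theorem \ref{barreledness of flc topology} immediately yields that $(X,\tau_F)$ is barreled. The crucial point is that $(X,\tau_F)$ is a genuine locally convex space in the classical sense, so the full machinery of duality theory is available for it. Moreover, as recalled in the introduction, $(X,\tau)$ and $(X,\tau_F)$ share the same collection of continuous seminorms and hence the same continuous dual $X'$; this guarantees that the duality $\langle X, X'\rangle$ relevant to the Mackey property is unambiguous and is common to both topologies.

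Next I would invoke the classical theorem that every barreled locally convex space is a Mackey space, which gives the conclusion at once. If one prefers to argue directly rather than cite this fact, the argument runs as follows. Any locally convex topology on $X$ compatible with the pairing $\langle X, X'\rangle$ is coarser than the Mackey topology $\tau(X, X')$, so $\tau_F\subseteq\tau(X,X')$. For the reverse inclusion, a basic $\tau(X,X')$-neighborhood of $0$ has the form $K^\circ$, the polar of an absolutely convex $\sigma(X', X)$-compact subset $K$ of $X'$. Such a $K$ is $\sigma(X',X)$-bounded, and by the barreledness of $(X,\tau_F)$ every weakly bounded subset of $X'$ is equicontinuous; hence $K$ is equicontinuous and $K^\circ$ is a $\tau_F$-neighborhood of $0$. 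This gives $\tau(X,X')\subseteq\tau_F$, so $\tau_F=\tau(X,X')$ and $(X,\tau_F)$ is a Mackey space.

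I do not expect any genuine obstacle here: once Theorem \ref{barreledness of flc topology} transfers barreledness to $(X,\tau_F)$, the problem lives entirely inside classical locally convex space theory, where the implication barreled $\Rightarrow$ Mackey is standard. The only point requiring a moment's care is bookkeeping about the duality, namely confirming that the dual used to define the Mackey topology is the same for $\tau$ and $\tau_F$; this is precisely the shared-dual property recorded in the introduction, so the corollary should follow essentially immediately from the preceding theorem.
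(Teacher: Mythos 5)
Your proposal is correct and takes essentially the same route as the paper: apply Theorem \ref{barreledness of flc topology} to transfer barreledness from $(X,\tau)$ to the locally convex space $(X,\tau_F)$, then invoke the classical fact that every barreled locally convex space is a Mackey space (the paper cites Corollary 4.9, p.~98 in \cite{lcsosborne}). Your optional direct verification of that classical fact, and the bookkeeping remark about the shared dual, are fine but not needed, since the Mackey property of $(X,\tau_F)$ is a statement purely about $(X,\tau_F)$ and its own dual.
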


\begin{proof} Suppose $(X, \tau)$ is barreled. By Theorem \ref{barreledness of flc topology}, $(X, \tau_{F})$ is a barreled locally convex space. Hence $(X, \tau_{F})$ is a Mackey space as every barreled locally convex space is a Mackey space (see, Corollary 4.9, p. 98 in \cite{lcsosborne}).\end{proof}
	
\begin{corollary}\label{extended banach space implies mackey space} Let $(X,\parallel\cdot\parallel)$ be an extended Banach space. If $\tau_{F}$ is the flc topology for $(X,\parallel\cdot\parallel)$, then $(X,\tau_{F})$ is a Mackey space.\end{corollary}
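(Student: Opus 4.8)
The plan is to reduce this corollary to the preceding one. Since the final statement (Corollary \ref{extended banach space implies mackey space}) asserts that if $(X,\parallel\cdot\parallel)$ is an extended Banach space, then $(X,\tau_F)$ is a Mackey space, the natural strategy is to chain together results already established in the excerpt, rather than to argue directly about the Mackey topology.

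First I would invoke Corollary \ref{barreledness of an extended Banach space}, which states that every extended Banach space $(X,\parallel\cdot\parallel)$ is barreled. This immediately places us in the hypothesis of Corollary \ref{barreled implies mackey space}. Then I would simply apply Corollary \ref{barreled implies mackey space} to the barreled elcs $(X,\parallel\cdot\parallel)$: that corollary says precisely that whenever $(X,\tau)$ is a barreled elcs with flc topology $\tau_F$, the associated finest locally convex space $(X,\tau_F)$ is a Mackey space. Taking $\tau$ to be the topology induced by the extended norm $\parallel\cdot\parallel$, the conclusion follows at once.

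In more detail, the two-step argument is: by Corollary \ref{barreledness of an extended Banach space}, $(X,\parallel\cdot\parallel)$ is a barreled elcs; hence, by Corollary \ref{barreled implies mackey space}, its finest locally convex space $(X,\tau_F)$ is a Mackey space. This completes the proof.

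I do not anticipate any genuine obstacle here, since the statement is a straightforward corollary obtained by composing two results proved just above it. The only point worth a moment's care is purely bookkeeping: one should note that an extended normed space is a special case of an elcs (its topology being induced by the single extended seminorm $\parallel\cdot\parallel$), so that both Corollary \ref{barreledness of an extended Banach space} and Corollary \ref{barreled implies mackey space} apply verbatim with $\tau$ taken to be the norm topology. No separate verification of the Mackey property is required, as all the substantive work has already been done in establishing barreledness and in Corollary \ref{barreled implies mackey space}.
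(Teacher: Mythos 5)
Your proposal is correct and matches the paper's proof exactly: the paper also deduces this corollary by combining Corollary \ref{barreledness of an extended Banach space} (extended Banach spaces are barreled) with Corollary \ref{barreled implies mackey space} (barreled elcs implies $(X,\tau_F)$ is Mackey).
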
	

\begin{proof} It follows from Corollary \ref{barreledness of an extended Banach space} and Corollary \ref{barreled implies mackey space}. \end{proof}	
 
\begin{example} Suppose $X=Y=c_0$ is the collection of all real sequences converging to $0$. For $n\in\mathbb{N}$, consider the extended norm $\rho_n:X\rightarrow [0,\infty]$ defined by 	\[\rho_n((x^i))=
	\begin{cases}
		\text{$\infty$,} &\quad\text{ if $x^i\neq 0$  for some $1\leq i\leq n$}\\
		\text{$\displaystyle{\sup_{i\in\mathbb{N}}}|x^i|$,} &\quad\text{if $x^i= 0$ for every $1\leq i\leq n$.}\\
	\end{cases}
	\] Let $\tau$ be the topology induced by the collection $\mathcal{P}=\{\rho_n:n\in\mathbb{N}\}$. Then $(X, \tau)$ is an elcs and $X_{fin}=\{0\}$. By Example 5.12 in \cite{flctopology}, the flc topology $\tau_F$ for $(X, \tau)$ is induced by the norm $\parallel (x^i)\parallel_\infty=\sup_{i\in\mathbb{N}}|x^i|$. Note that $(X, \tau_F)$ is a Banach space. So by Theorem \ref{barreledness of flc topology}, $(X, \tau)$ is a barreled elcs.\end{example}

Now, we use Theorem \ref{barreledness of flc topology} to prove several permanence properties related to barreledness of an elcs.  

\begin{theorem} Let $M$ be a dense subspace of an elcs $(X, \tau)$. If $M$ is a barreled subspace of $(X, \tau)$, then $(X, \tau)$ is also barreled. \end{theorem}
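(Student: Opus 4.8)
The plan is to argue directly with barrels rather than routing through the flc topology, since the argument is essentially topological and the extended structure introduces no new difficulty. Let $B$ be an arbitrary barrel in $(X,\tau)$; the goal is to show that $B$ is a neighborhood of $0$.

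First I would check that $B\cap M$ is a barrel in $(M,\tau|_M)$. It is closed in $M$ because $B$ is closed in $X$; it is absolutely convex as the intersection of the absolutely convex set $B$ with the subspace $M$; and it is absorbing in $M$ because for each $m\in M$ the absorbency of $B$ in $X$ yields $c>0$ with $cm\in B$, and $cm\in M$ since $M$ is a subspace, so $cm\in B\cap M$. Since $M$ is a barreled subspace, $B\cap M$ is a neighborhood of $0$ in $(M,\tau|_M)$, so by the definition of the subspace topology there is a $\tau$-open set $U$ in $X$ with $0\in U$ and $U\cap M\subseteq B\cap M\subseteq B$.

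The \emph{key step} is then a density argument. Because $M$ is dense in $X$ and $U$ is open, $M\cap U$ is dense in $U$, that is, $U\subseteq \text{Cl}_\tau(M\cap U)$. Combining this with $M\cap U\subseteq B$ and the closedness of $B$ gives $U\subseteq \text{Cl}_\tau(M\cap U)\subseteq \text{Cl}_\tau(B)=B$. Hence $B$ contains the open neighborhood $U$ of $0$ and is therefore a neighborhood of $0$, so $(X,\tau)$ is barreled.

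I do not expect a genuine obstacle here: the only points requiring care are that passing to the subspace $M$ preserves all three defining properties of a barrel (in particular absorbency, which relies on $M$ being a subspace), and that the density-plus-closedness sandwich is applied to the open set $U$ rather than to $B$ itself. I would also note the alternative route via Theorem \ref{barreledness of flc topology}: a lower semi-continuous seminorm $P$ on $(X,\tau)$ restricts to a lower semi-continuous seminorm on the barreled subspace $M$, hence is continuous on $M$; the same density argument applied to the set $\{x\in X: P(x)\le \epsilon\}$, which is closed by lower semicontinuity, then forces $P$ to be bounded on a neighborhood of $0$ in $X$ and thus continuous, giving barreledness of $(X,\tau)$ by condition $(4)$. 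Since the direct barrel argument is shorter, I would present that one.
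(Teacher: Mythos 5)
Your proof is correct, and it takes a genuinely different route from the paper's. The paper never touches barrels of $(X,\tau)$ directly: it transfers the hypothesis to the finest locally convex topology, using the fact (Theorem 4.1 in \cite{flctopology}) that the flc topology of $(M,\tau|_M)$ is $\tau_F|_M$, applying Theorem \ref{barreledness of flc topology} to conclude $(M,\tau_F|_M)$ is barreled, then citing the classical locally convex result for dense barreled subspaces (Theorem 11.2.1 in \cite{tvsnarici}) to get that $(X,\tau_F)$ is barreled, and finally applying Theorem \ref{barreledness of flc topology} once more. You instead run the classical density argument directly in the extended setting: the trace $B\cap M$ of a barrel $B$ is a barrel in $(M,\tau|_M)$ (your verification of absorbency, using that $cm\in M$ for $m \in M$, is the one point needing care and is correct), barreledness of $M$ yields a $\tau$-open $U\ni 0$ with $U\cap M\subseteq B$, and the purely topological sandwich $U\subseteq \text{Cl}_\tau(U\cap M)\subseteq \text{Cl}_\tau(B)=B$ finishes; the inclusion $U\subseteq \text{Cl}_\tau(U\cap M)$ for open $U$ and dense $M$ is a standard fact and holds in any topological space. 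What the paper's route buys is consistency with its overarching theme (reduce every question about $(X,\tau)$ to $(X,\tau_F)$) and reuse of known theorems; what yours buys is a self-contained, elementary proof that needs neither the flc machinery nor any external citation, and that makes explicit that the extended structure creates no obstacle for this permanence property. Your sketched alternative via lower semi-continuous seminorms and condition (4) of Theorem \ref{barreledness of flc topology} is also valid, but as you note it is the same density-plus-closedness argument applied to the closed sets $\{x\in X: P(x)\leq \epsilon\}$ rather than to $B$, so the direct version is the right one to present.
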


\begin{proof}Suppose $\tau_F$ is the flc topology for $(X, \tau)$. Then by Theorem 4.1 in \cite{flctopology}, the flc topology for $(M, \tau|_M)$ is equal to $\tau_F|_M$. Therefore by Theorem \ref{barreledness of flc topology}, $(M, \tau_{F}|_M)$ is barreled. Since $M$ is dense in $(X, \tau)$, $M$ is a dense subspace of $(X, \tau_F)$ as $\tau_F\subseteq\tau$. By Theorem 11.2.1, p. 408 in \cite{tvsnarici}, $(X, \tau_F)$ is barreled. Hence by Theorem \ref{barreledness of flc topology}, $(X, \tau)$ is barreled.\end{proof}

\begin{theorem} Let $M$ be a subspace of a barreled elcs $(X, \tau)$ with countable co-dimension. Then $M$ is barreled.\end{theorem}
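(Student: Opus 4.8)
The plan is to transfer the problem to the associated finest locally convex space and then invoke the classical theory there. First I would let $\tau_F$ be the flc topology for $(X,\tau)$. Since $(X,\tau)$ is barreled, Theorem \ref{barreledness of flc topology} guarantees that $(X,\tau_F)$ is a barreled locally convex space. The subspace $(M, \tau|_M)$ is itself an elcs (the restrictions of the generating extended seminorms induce $\tau|_M$, and Hausdorffness is inherited from $X$), so it too has an associated finest locally convex space; by Theorem 4.1 in \cite{flctopology} its flc topology is precisely $\tau_F|_M$. This is the same reduction used in the proof of the dense-subspace theorem above.

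Next I would observe that codimension is a purely algebraic invariant of the pair $M \subseteq X$, independent of the topology. Hence $M$ has countable codimension in the locally convex space $(X,\tau_F)$ as well. The crux of the argument is then the classical Saxon--Levin theorem: a subspace of countable codimension in a barreled locally convex space is again barreled. Applying this to $M \subseteq (X,\tau_F)$ yields that $(M, \tau_F|_M)$ is a barreled locally convex space.

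Finally, since $\tau_F|_M$ is the flc topology for the elcs $(M, \tau|_M)$ and $(M,\tau_F|_M)$ has just been shown to be barreled, Theorem \ref{barreledness of flc topology} --- now applied to the elcs $(M,\tau|_M)$ rather than to $X$ --- shows that $(M, \tau|_M)$ is barreled. That is, $M$ is a barreled subspace of $(X,\tau)$, which is exactly the assertion.

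The main obstacle I anticipate is not the reduction scheme, which is routine given the earlier results, but the correct invocation of the locally convex ingredient. I must be certain that the Saxon--Levin theorem applies in the stated generality (countable, not merely finite, codimension) and cite it precisely, presumably from \cite{tvsnarici}. I must also verify carefully that Theorem 4.1 in \cite{flctopology} genuinely identifies the flc topology of the subspace with $\tau_F|_M$, since this identification is what legitimizes the second, downward application of Theorem \ref{barreledness of flc topology} to $(M,\tau|_M)$.
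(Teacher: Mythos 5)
Your proof is correct and follows essentially the same route as the paper: pass to the barreled space $(X,\tau_F)$ via Theorem \ref{barreledness of flc topology}, apply the classical Saxon--Levin countable-codimension theorem there (Theorem 11.2.9, p.~412 in \cite{tvsnarici}), and descend back to $(M,\tau|_M)$ using the identification of its flc topology with $\tau_F|_M$. The one discrepancy is the citation for that identification: the paper invokes Theorem 4.3 of \cite{flctopology} for a general subspace, reserving Theorem 4.1 for the dense-subspace case, so you should verify which statement in \cite{flctopology} actually covers an arbitrary (not necessarily dense) subspace.
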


\begin{proof}By Theorem \ref{barreledness of flc topology}, $M$ is a subspace of the barreled locally convex space $(X, \tau_F)$ with countable co-dimension. Therefore by Theorem 11.2.9, p. 412 in \cite{tvsnarici}, $M$ is a barreled subspace of $(X, \tau_F)$. It is shown in Theorem 4.3 of  \cite{flctopology} that the flc topology for $(M, \tau|_M)$ is equal to $\tau_F|_M$. Hence by Theorem \ref{barreledness of flc topology}, $(M, \tau|_M)$ is barreled. \end{proof}

Our next result shows that if $\tau_F$ is the flc topology for an elcs $(X, \tau)$, then both $(X, \tau)$ and $(X, \tau_{F})$ have the same closed subspaces.

\begin{theorem}\label{closedness of a subspace in finest space}	Let $(X, \tau)$ be an elcs with the flc topology $\tau_{F}$. Suppose $Y$ is a subspace of $X$. Then $Y$ is closed in $(X, \tau)$ if and only if it is closed in $(X, \tau_F)$.\end{theorem}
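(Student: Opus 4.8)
The forward implication is immediate: since $\tau_F\subseteq\tau$, any set that is $\tau_F$-closed has a $\tau_F$-open (hence $\tau$-open) complement, so it is $\tau$-closed. The plan for the converse is to show that if $Y$ is $\tau$-closed then $\text{Cl}_{\tau_F}(Y)\subseteq Y$. The key leverage is that $(X,\tau)$ and $(X,\tau_F)$ have the same dual $X^*$, so it suffices to produce, for each $x_0\notin Y$, a functional $f\in X^*$ with $f|_Y=0$ and $f(x_0)\neq 0$. Such an $f$ is $\tau_F$-continuous, and then $\{x:|f(x)-f(x_0)|<|f(x_0)|\}$ is a $\tau_F$-neighborhood of $x_0$ disjoint from $Y$ (for $y\in Y$ one has $|f(y)-f(x_0)|=|f(x_0)|$), whence $x_0\notin\text{Cl}_{\tau_F}(Y)$. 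Thus everything reduces to a Hahn--Banach separation of a point from a $\tau$-closed subspace inside the (non--locally-convex) elcs $(X,\tau)$.

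To build $f$ I would first use $\tau$-closedness to choose a continuous extended seminorm $\rho$ and $\epsilon>0$ with $\rho(x_0-y)\geq\epsilon$ for all $y\in Y$, and then localize to the clopen subspace $W:=X_{fin}^\rho$ (clopen by property (4) of Section 2), on which $\rho$ restricts to a genuine finite-valued seminorm. Note that $W+Y$ is a $\tau$-open, hence clopen, subspace, since it contains the open set $W$. If $x_0\notin W+Y$, I would take $f$ to be any linear functional vanishing on the open subspace $W+Y\supseteq Y$ with $f(x_0)\neq 0$; it is continuous because it vanishes on a neighborhood of $0$. If instead $x_0=w+y_0$ with $w\in W$ and $y_0\in Y$, then $\rho(w-y')\geq\epsilon$ for every $y'\in Y\cap W$, so in the seminormed space $(W,\rho)$ the point $w$ has $\rho$-distance at least $\epsilon$ from the subspace $Y\cap W$.

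Here I would invoke the seminorm form of the classical Hahn--Banach theorem to obtain a linear functional $g$ on $W$ with $g|_{Y\cap W}=0$, $g(w)\neq 0$, and $|g|\leq\rho$ on $W$. Because $g$ annihilates $Y\cap W$, the formula $f_0(u+y)=g(u)$ (for $u\in W$, $y\in Y$) defines a linear functional on $W+Y$ that kills $Y$ and satisfies $f_0(x_0)=g(w)\neq 0$; it is continuous since $|f_0|\leq\rho<1$ on the $\tau$-neighborhood $\{x:\rho(x)<1\}\subseteq W$ of $0$. Finally, since $W+Y$ is clopen, I would extend $f_0$ by $0$ on an algebraic complement to obtain $f\in X^*$, the extension staying continuous because it remains bounded by $1$ on that same neighborhood of $0$. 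This $f$ separates $x_0$ from $Y$ and completes the argument.

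The main obstacle is exactly this separation step: $(X,\tau)$ is not locally convex, so Hahn--Banach is unavailable directly on $X$. The device that overcomes it is the clopen finite subspace $X_{fin}^\rho$, which converts the extended seminorm $\rho$ into an honest seminorm and lets the classical separation run there; the only remaining care is verifying that the resulting functional extends from $W+Y$ to all of $X$ without losing continuity, which is precisely what the openness at $0$ of $W$ and of $W+Y$ guarantees.
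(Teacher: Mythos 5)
Your proof is correct, and its end-game coincides with the paper's: produce $f\in X^*$ with $f|_Y=0$ and $f(x_0)\neq 0$, then use the fact that $(X,\tau)$ and $(X,\tau_F)$ have the same dual, so that the $\tau_F$-closed kernel $f^{-1}(0)$ witnesses $x_0\notin\text{Cl}_{\tau_F}(Y)$. Where you genuinely differ is in how the separating functional is built. The paper defines $f(y+\alpha x_0)=\alpha$ on $Z=Y\oplus\text{span}(x_0)$, proves $\tau|_Z$-continuity of this functional by a net/contradiction argument (using the same $\rho$ and $\epsilon$ with $\rho(x_0-y)>\epsilon$ that you extract from closedness), and then cites the extension theorem for continuous linear functionals on an elcs (Corollary 4.2 of \cite{flctopology}) to obtain $g\in X^*$ with $g|_Z=f$. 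You instead carry out the separation and extension by hand: classical Hahn--Banach dominated by the honest seminorm $\rho|_W$ inside the clopen subspace $W=X_{fin}^\rho$ (where $w$ lies at $\rho$-distance at least $\epsilon$ from $Y\cap W$), the gluing $f_0(u+y)=g(u)$ across $Y$ (well defined precisely because $g$ annihilates $Y\cap W$), and extension by zero across the clopen subspace $W+Y$, with the degenerate case $x_0\notin W+Y$ handled by a functional vanishing on all of $W+Y$. The paper's route buys brevity, since the delicate step is prepackaged in \cite{flctopology}; your route buys self-containedness and a structurally cleaner continuity argument --- your functional satisfies $|f|\leq\rho$ on a neighborhood of $0$, so continuity is automatic and no net argument is needed --- and in effect you re-prove inline exactly the instance of the cited extension theorem that this theorem requires. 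One point worth spelling out in a final write-up: in an elcs scalar multiplication is not jointly continuous, so ``bounded on a neighborhood of $0$ implies continuous'' needs the observation that the sets $\{x\in X:\rho(x)<\epsilon\}$ remain neighborhoods of $0$ as $\epsilon$ shrinks; your explicit bound by $\rho$ makes this immediate, but the phrase ``bounded by $1$ on that same neighborhood'' alone would not suffice verbatim in an arbitrary topological group setting.
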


\begin{proof}If $Y$ is closed in $(X, \tau_F)$, then $Y$ is closed in $(X, \tau)$ as $\tau_F\subseteq \tau$. Conversely, suppose $Y$ is closed in $(X, \tau)$ and $x_0\notin Y$. Let $Z= Y\oplus \text{ span }(x_0)$. Consider the linear functional $f$ on $Z$ defined by $f(y+\alpha x_0)=\alpha$ for every $y\in Y$ and $\alpha\in\mathbb{K}$. If $f$ is not continuous on $(Z, \tau|_Z)$, then there exists a net $(z_\lambda)_{\lambda\in\Lambda}$ in $Z$ such that $z_\lambda\to z$ in  $(Z, \tau|_Z)$ and $f(z_\lambda)\nrightarrow f(z)$. Let for every $\lambda$, $z_\lambda=y_\lambda+\alpha_\lambda x_0$ and $z= y+ \alpha x_0$. Then $\alpha_\lambda\nrightarrow \alpha$. Therefore there exists a $\delta>0$ such that for each $\lambda\in\Lambda$, there is a $\lambda'\geq\lambda$ satisfying $|\alpha_{\lambda'}-\alpha|>\delta$. Since $Y$ is closed in $(X, \tau)$ and $x_0\notin Y$, there exist a continuous extended seminorm $\rho$ on $X$ and $0<\epsilon<\delta$ such that $\rho(x_0-y)>\epsilon$ for every $y\in Y$. Also, we can find a $\lambda_0\in\Lambda$ such that   $\rho(y_{\lambda_0}+\alpha_{\lambda_0} x_0 - y- \alpha x_0)<\epsilon^2$ and $|\alpha_{\lambda_0}-\alpha|>\delta$. Since $\frac{y_{\lambda_0}-y}{\alpha-\alpha_{\lambda_0}}\in Y$, we have  $$\epsilon < \rho\left( \frac{y_{\lambda_0}-y}{\alpha-\alpha_{\lambda_0}}-x_0\right)\leq \frac{\rho\left(y_{\lambda_0}-y-(\alpha-\alpha_{\lambda_0})x_0\right)}{|\alpha_{\lambda_0}-\alpha|}< \frac{(\epsilon)^2}{\delta} <\epsilon.$$ We arrive at a contradiction. So $f$ is continuous on $(Z, \tau|_Z)$. By Corollary 4.2 in \cite{flctopology}, there exists a $g\in X^*$ such that $g|_Z=f$. Since both $(X, \tau)$ and $(X, \tau_F)$ have the same dual, $g^{-1}(0)$ is closed in $(X, \tau_F)$. Therefore $Y$ is closed in $(X, \tau_F)$ as $Y\subseteq g^{-1}(0)$ and $g(x_0)=1$.\end{proof}

Recall that if $M$ is a closed subspace of an elcs $(X, \tau)$, then the quotient space $X/M=\{x+M :x\in X\}$ with the quotient topology $\pi(\tau)$ is an elcs,  where $\pi(x)=x+M$ for $x \in X$ denotes the quotient map. Similarly, if $\{(X_i, \tau_i) : i\in\mathcal{I}\}$ is a collection of extended locally convex spaces, then the product space $\Pi_{i\in \mathcal{I}} X_i$ with the product topology $\Pi_{i\in \mathcal{I}}\tau_i$ is an elcs. For more details about quotient and product space of extended locally convex spaces, we refer to \cite{flctopology, esaetvs}. 

\begin{theorem} Let $(X, \tau)$ be a barreled elcs with the flc topology $\tau_F$. Suppose  $M$ is a closed subspace $X$. Then the quotient space $X/M$ with the quotient topology $\pi(\tau)$ is a barreled elcs. \end{theorem}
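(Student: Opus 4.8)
The plan is to prove barreledness of $(X/M, \pi(\tau))$ directly through the lower semi-continuous seminorm characterization in Theorem \ref{barreledness of flc topology}, rather than by passing to the finest locally convex topology of the quotient. Since $M$ is closed in $(X, \tau)$, the construction recalled just before the statement guarantees that $(X/M, \pi(\tau))$ is again an elcs, so Theorem \ref{barreledness of flc topology} applies to it. By the equivalence of statements $(1)$ and $(4)$ in that theorem, applied to the quotient, it suffices to show that every lower semi-continuous seminorm on $(X/M, \pi(\tau))$ is continuous.

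The central tool is the universal property of the quotient topology: a set $V \subseteq X/M$ is $\pi(\tau)$-open if and only if $\pi^{-1}(V)$ is $\tau$-open. First I would fix a lower semi-continuous seminorm $Q$ on $(X/M, \pi(\tau))$ and form the composition $Q \circ \pi : X \to [0, \infty)$. A short check, using that $\pi$ is linear, shows that $Q \circ \pi$ is a (finite-valued) seminorm on $X$. Next I would transfer semicontinuity downward: since $(Q\circ\pi)^{-1}\big((c, \infty)\big) = \pi^{-1}\big(Q^{-1}((c,\infty))\big)$ for every $c \in \mathbb{R}$, and the left-hand set is $\tau$-open exactly when $Q^{-1}((c,\infty))$ is $\pi(\tau)$-open, the lower semi-continuity of $Q$ forces $Q \circ \pi$ to be lower semi-continuous on $(X, \tau)$.

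Now I would invoke the hypothesis that $(X, \tau)$ is barreled: by Theorem \ref{barreledness of flc topology} (the implication $(1)\Rightarrow(4)$), the lower semi-continuous seminorm $Q\circ\pi$ is in fact continuous on $(X, \tau)$. Finally, by the same universal property of the quotient map, applied now to continuity, the continuity of $Q \circ \pi$ on $(X, \tau)$ yields the continuity of $Q$ on $(X/M, \pi(\tau))$. Thus every lower semi-continuous seminorm on the quotient is continuous, and Theorem \ref{barreledness of flc topology} (the implication $(4)\Rightarrow(1)$ for the quotient elcs) gives that $(X/M, \pi(\tau))$ is barreled.

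I expect the only delicate point to be the clean back-and-forth transfer of lower semi-continuity and of continuity across $\pi$; both rest entirely on the characterization of open sets under a quotient map, so there is no serious obstacle once the lower semi-continuous seminorm criterion is adopted, and the closedness of $M$ is used precisely to ensure that $(X/M,\pi(\tau))$ is an elcs. An alternative route would first pass to the finest locally convex spaces---$(X, \tau)$ barreled gives $(X, \tau_F)$ barreled by Theorem \ref{barreledness of flc topology}, $M$ remains closed in $(X, \tau_F)$ by Theorem \ref{closedness of a subspace in finest space}, and a classical result yields that the quotient of $(X, \tau_F)$ by $M$ is a barreled locally convex space---but this approach would then require identifying the flc topology of $(X/M, \pi(\tau))$ with the corresponding quotient of $\tau_F$, an extra step I would prefer to avoid.
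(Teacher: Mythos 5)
Your proof is correct, but it takes a genuinely different route from the paper. The paper's argument is exactly the ``alternative route'' you describe at the end and chose to avoid: it passes to the finest locally convex spaces, using Theorem \ref{barreledness of flc topology} to get that $(X,\tau_F)$ is barreled, Theorem \ref{closedness of a subspace in finest space} to transfer closedness of $M$ from $\tau$ to $\tau_F$, the classical fact that a quotient of a barreled locally convex space by a closed subspace is barreled, and then the identification (Theorem 4.3 of \cite{flctopology}) of the flc topology of $(X/M,\pi(\tau))$ with $\pi(\tau_F)$ to come back. Your argument instead stays entirely inside the extended setting: you pull a lower semi-continuous seminorm $Q$ on the quotient back to the lower semi-continuous seminorm $Q\circ\pi$ on $(X,\tau)$, use the equivalence $(1)\Leftrightarrow(4)$ of Theorem \ref{barreledness of flc topology} on $(X,\tau)$ to get continuity of $Q\circ\pi$, push continuity back down, and apply $(4)\Rightarrow(1)$ to the quotient elcs. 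This is more self-contained: it needs neither the closedness-transfer theorem, nor the external quotient theorem for barreled locally convex spaces, nor the quotient-flc identification. What it does need, and what you should state explicitly rather than just invoke, is that $\pi(\tau)$ really is the final topology with respect to $\pi$, i.e.\ that $V$ is $\pi(\tau)$-open if and only if $\pi^{-1}(V)$ is $\tau$-open. This is standard but not completely automatic here, since an elcs need not be a topological vector space (scalar multiplication can fail to be continuous); it does hold because addition is continuous, so $\pi^{-1}(\pi(U))=U+M=\bigcup_{m\in M}(U+m)$ is $\tau$-open for every $\tau$-open $U$, whence the image topology coincides with the final topology and your back-and-forth transfer of semicontinuity and continuity across $\pi$ is legitimate. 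With that one line added, your proof is complete; the paper's route, while heavier, has the advantage of being the same template it reuses for the other permanence properties (subspaces of countable co-dimension, products).
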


\begin{proof} By Theorems \ref{barreledness of flc topology} and \ref{closedness of a subspace in finest space}, $M$ is a closed subspace of the barreled locally convex space $(X, \tau_F)$. Then by Theorem 11.12.3, p. 409 in \cite{tvsnarici}, the quotient space $(X/M, \pi(\tau_F))$ is a barreled locally convex space. It is shown in Theorem 4.3 of  \cite{flctopology} that the flc topology for $\left( X/M, \pi(\tau)\right)$ is equal to $\pi(\tau_F)$. Therefore  by Theorem \ref{barreledness of flc topology},  $\left( X/M, \pi(\tau)\right)$ is barreled.  \end{proof}

\begin{theorem}Suppose $\{(X_i, \tau_i) : i\in\mathcal{I}\}$ is a collection of barreled extended locally convex spaces. Then the product space $\Pi_{i\in \mathcal{I}} X_i$ with the product topology $\Pi_{i\in \mathcal{I}}\tau_i$ is a barreled elcs.  \end{theorem}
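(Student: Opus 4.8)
The plan is to mimic the proofs of the preceding theorems on subspaces and quotients and reduce the assertion to the classical fact about products of ordinary locally convex spaces. Write $X=\Pi_{i\in\mathcal{I}}X_i$, $\tau=\Pi_{i\in\mathcal{I}}\tau_i$, and let $(\tau_i)_F$ be the flc topology for each factor $(X_i,\tau_i)$. By Theorem \ref{barreledness of flc topology}, the hypothesis that every $(X_i,\tau_i)$ is barreled is equivalent to every $(X_i,(\tau_i)_F)$ being a barreled locally convex space. Since an arbitrary product of barreled locally convex spaces is barreled (a classical result; see \cite{tvsnarici}), the locally convex space $\left(X,\Pi_{i\in\mathcal{I}}(\tau_i)_F\right)$ is barreled. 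Hence, if I can identify $\Pi_{i\in\mathcal{I}}(\tau_i)_F$ with the flc topology $\tau_F$ of $(X,\tau)$, then Theorem \ref{barreledness of flc topology} will immediately give that $(X,\tau)$ is barreled.

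Everything therefore rests on the identity $\tau_F=\Pi_{i\in\mathcal{I}}(\tau_i)_F$, and this is the step I expect to be the main obstacle. One inclusion is routine: because $(\tau_i)_F\subseteq\tau_i$ for each $i$, the product $\Pi_{i\in\mathcal{I}}(\tau_i)_F$ is a locally convex topology coarser than $\tau$, and so it is coarser than the finest such topology $\tau_F$. For the reverse inclusion I would use that $(X,\tau)$ and $(X,\tau_F)$ carry the same continuous seminorms, so that $\tau_F$ is generated by the finite-valued $\tau$-continuous seminorms; it then suffices to show that each such seminorm $P$ is continuous with respect to $\Pi_{i\in\mathcal{I}}(\tau_i)_F$.

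To verify this, I would start from a basic $\tau$-neighborhood of $0$ on which $P$ is controlled; such a neighborhood involves only finitely many coordinates $i_1,\dots,i_n$ and is described by continuous \emph{extended} seminorms $\rho^{i_1},\dots,\rho^{i_n}$ on the corresponding factors. The real difficulty is exactly the one encountered in Proposition \ref{lower semicotinuous seminorms}: these defining seminorms may take the value $\infty$ and so cannot be used directly to build a $\Pi_{i\in\mathcal{I}}(\tau_i)_F$-continuous seminorm dominating $P$. I would circumvent it by the device used there, replacing each $\rho^{i_k}$ through a splitting $X_{i_k}=(X_{i_k})_{fin}^{\rho^{i_k}}\oplus M_{i_k}$ by a finite-valued seminorm that is continuous for $(\tau_{i_k})_F$ (invoking Theorem 3.5 of \cite{flctopology}), and then assembling these finitely many factor-seminorms, composed with the projections, into a single $\Pi_{i\in\mathcal{I}}(\tau_i)_F$-continuous seminorm dominating $P$ in the manner of the seminorm $\mu$ built in the proof of Proposition \ref{lower semicotinuous seminorms}. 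This yields $\tau_F\subseteq\Pi_{i\in\mathcal{I}}(\tau_i)_F$, hence the identity, and the theorem follows. If \cite{flctopology} already records that the flc construction commutes with arbitrary products, this step can instead be cited directly, just as Theorem 4.3 of \cite{flctopology} was used for subspaces and quotients.
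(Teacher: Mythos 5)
Your proposal is correct and takes essentially the same route as the paper: reduce via Theorem \ref{barreledness of flc topology} to the classical fact that a product of barreled locally convex spaces is barreled, using the identity $\tau_F=\Pi_{i\in\mathcal{I}}(\tau_i)_F$. The identity you identify as the crux is precisely Theorem 4.4 of \cite{flctopology}, which the paper cites directly (your fallback option), and your sketched direct proof of it, adapting the seminorm construction from Proposition \ref{lower semicotinuous seminorms} to finitely many coordinates, is sound.
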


\begin{proof} Let $\tau_{F_i}$ be the flc topology of the elcs $(X_i, \tau_i)$ for every $i\in\mathcal{I}$. Then by Theorem \ref{barreledness of flc topology}, $(X_i, \tau_{F_i})$ is a barreled locally convex space for every $i\in \mathcal{I}$. Therefore by Theorem 11.12.4, p. 409 in \cite{tvsnarici}, the product space  $\left(\Pi_{i\in \mathcal{I}}X_i, \Pi_{i\in \mathcal{I}}\tau_{F_i} \right)$  is a barreled locally convex space. It is shown in Theorem 4.4 of \cite{flctopology} that the flc topology for $\left(\Pi_{i\in \mathcal{I}}X_i, \Pi_{i\in \mathcal{I}}\tau_{i} \right)$ is equal to $\Pi_{i\in\mathcal{I}}\tau_{F_i}$. Hence by Theorem \ref{barreledness of flc topology}, $\left(\Pi_{i\in \mathcal{I}}X_i, \Pi_{i\in \mathcal{I}}\tau_{i} \right)$ is a barreled elcs.\end{proof}

\section{Function Spaces}
For a metric space $(X,d)$, let $C(X)$ denote the space of real-valued continuous functions on  $(X,d)$. A collection $\mathcal{B}$ of nonempty sets in a metric space $(X, d)$ is said to be a \textit{bornology} if it covers $X$ and is closed under finite union and taking subsets of its members. A \textit{base} $\mathcal{B}_0$ is a subfamily of $\mathcal{B}$ which is cofinal in $\mathcal{B}$ under the set inclusion. If every member of $\mathcal{B}_0$ is closed in $(X,d)$, then $\mathcal{B}$ is said to have a \textit{closed base}. For details on metric bornologies, we refer to the monograph \cite{bala}.  

 In this section,  we study the barreledness of the spaces $(C(X),\tau_{\mathcal{B}})$ and $(C(X),\tau_{\mathcal{B}}^{s})$, where $\tau_{\mathcal{B}}$ denotes the topology of uniform convergence on $\mathcal{B}$, and $\tau_{\mathcal{B}}^{s}$ denotes the topology of strong uniform convergence on $\mathcal{B}$. 


\begin{definition} \normalfont Let $\mathcal{B}$ be a bornology on a metric space $(X, d)$. Then \textit{the topology $\tau_{\mathcal{B}}$ of  uniform convergence on $\mathcal{B}$} is determined by a uniformity on $C(X)$ having base consisting of sets of the form  $$[B, \epsilon]=\left\lbrace (f, g) : \forall x\in B,~ |f(x)-g(x)|<\epsilon \right\rbrace~ (B\in\mathcal{B},~ \epsilon>0). $$  \end{definition}

\begin{definition}{\normalfont (\cite{Suc})} \normalfont Let $\mathcal{B}$ be a bornology on a metric space $(X, d)$. Then \textit{the topology $\tau^{s}_{\mathcal{B}}$ of strong uniform convergence on $\mathcal{B}$} is determined by a uniformity on $C(X)$ having base consisting of sets of the form  $$[B, \epsilon]^s=\left\lbrace (f, g) : \exists~ \delta>0 ~\forall x\in B^\delta,~ |f(x)-g(x)|<\epsilon \right\rbrace~ (B\in\mathcal{B},~ \epsilon>0),$$ where for $B\subseteq X$, $B^\delta=\displaystyle{\bigcup_{y\in B}}\{x\in X: d(x, y)<\delta\}$.  	 \end{definition}

The topology $\tau_{\mathcal{B}}$ on $C(X)$ can also be induced by the collection $\mathcal{P}=\{\rho_{B}: B\in\mathcal{B}\}$ of extended seminorms, where $\rho_{B}(f)=\sup_{x\in B}|f(x)|$ for $f\in C(X)$. Similarly, the topology $\tau_{\mathcal{B}}^s$ on $C(X)$ is induced by the collection $\mathcal{P}=\{\rho_{B}^s: B\in\mathcal{B}\}$ of extended seminorms, where $\rho_B^s(f) =\inf_{\delta>0}\left\lbrace \sup_{x\in B^\delta}|f(x)|\right\rbrace$ for $f\in C(X)$. Therefore both $(C(X), \tau_{\mathcal{B}})$ and $(C(X), \tau_{\mathcal{B}}^s)$ are extended locally convex spaces. When $\mathcal{B}$ is the bornology of all finite subsets and we extend our function space to $\mathbb{R}^X$, the set of all real-valued functions on $(X,d)$, $\tau_{\mathcal{B}}^s$ is the weakest topology on $\mathbb{R}^X$ for which $C(X)$ is closed (\cite{Suc}, Corollary 6.8).  For more details related to $\tau_{\mathcal{B}}$ and $\tau_{\mathcal{B}}^s$, we refer to \cite{Suc, ATasucob}.

\begin{theorem}\label{barreledness on tauBs} Let $\mathcal{B}$ be a bornology with a closed base on a metric space $(X, d)$. If $(C(X), \tau_{\mathcal{B}}^s)$ is barreled, then the bornology $\mathcal{K}$ of all non-empty relatively compact subsets of $X$ is contained in $\mathcal{B}$. \end{theorem}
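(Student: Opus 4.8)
The plan is to argue by contraposition through Theorem \ref{barreledness of flc topology}: I will show that if some non-empty relatively compact $K\subseteq X$ fails to lie in $\mathcal{B}$, then $(C(X),\tau_{\mathcal{B}}^s)$ carries a lower semi-continuous seminorm that is \emph{not} continuous, which by the equivalence $(1)\Leftrightarrow(4)$ of that theorem prevents $(C(X),\tau_{\mathcal{B}}^s)$ from being barreled. So fix a non-empty relatively compact $K$ and consider $P\colon C(X)\to[0,\infty)$ defined by $P(f)=\sup_{x\in\overline{K}}|f(x)|$; this is a genuine (finite-valued) seminorm because $\overline{K}$ is compact and each $f$ is continuous. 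Since $\mathcal{B}$ covers $X$ and is closed under subsets, every singleton $\{x\}$ lies in $\mathcal{B}$, and by continuity of $f$ one checks $\rho_{\{x\}}^s(f)=|f(x)|$, so each evaluation $f\mapsto|f(x)|$ is $\tau_{\mathcal{B}}^s$-continuous. As $P$ is the pointwise supremum of the continuous seminorms $f\mapsto|f(x)|$ over $x\in\overline{K}$, it is lower semi-continuous (equivalently, $\{f:P(f)\le c\}=\bigcap_{x\in\overline{K}}\{f:|f(x)|\le c\}$ is $\tau_{\mathcal{B}}^s$-closed for each $c$).

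Assuming $(C(X),\tau_{\mathcal{B}}^s)$ is barreled, Theorem \ref{barreledness of flc topology} forces $P$ to be continuous, so $\{f:P(f)<1\}$ is a neighborhood of $0$. Unwinding the definition of $\tau_{\mathcal{B}}^s$, and using that $\mathcal{B}$ is closed under finite unions together with the monotonicity $\rho_{B_i}^s\le\rho_B^s$ whenever $B_i\subseteq B$, I can collapse a basic neighborhood to a single $B\in\mathcal{B}$ and $\epsilon>0$ with
\[
\{f:\rho_B^s(f)<\epsilon\}\subseteq\{f:P(f)<1\}.
\]
Because $\mathcal{B}$ has a closed base, I may replace $B$ by a closed member of $\mathcal{B}$ containing it, which only enlarges $\rho_B^s$ and hence preserves the inclusion; so I assume $B$ is closed.

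Now suppose toward a contradiction that $K\notin\mathcal{B}$. Then $\overline{K}\not\subseteq B$ (otherwise $\overline{K}$, and hence $K$, would belong to $\mathcal{B}$ by closure under subsets), so choose $p\in\overline{K}\setminus B$; since $B$ is closed, $\eta:=d(p,B)>0$. The tent function $f(x)=\max\{0,\,1-\tfrac{2}{\eta}d(x,p)\}$ is continuous, satisfies $f(p)=1$, and vanishes on $B^{\eta/2}$ (for $x\in B^{\eta/2}$ there is $b\in B$ with $d(x,b)<\eta/2$, whence $d(x,p)\ge d(p,b)-d(b,x)>\eta/2$). Consequently $\rho_B^s(f)\le\sup_{x\in B^{\eta/2}}|f(x)|=0<\epsilon$, while $P(f)\ge|f(p)|=1$, contradicting the displayed inclusion. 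Hence $K\in\mathcal{B}$, giving $\mathcal{K}\subseteq\mathcal{B}$. I expect the one delicate point to be the step forcing $\rho_B^s(f)=0$: this is exactly where the \emph{strong} topology $\tau_{\mathcal{B}}^s$ (as opposed to $\tau_{\mathcal{B}}$) and the closed-base hypothesis are indispensable, since the test function must vanish on a full enlargement $B^{\eta/2}$ rather than merely on $B$, which is possible only because closedness of $B$ yields $d(p,B)>0$.
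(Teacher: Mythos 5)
Your proof is correct and is essentially the paper's argument in seminorm form: your $P$ is the Minkowski functional of the paper's barrel $V_K=\{f\in C(X):\sup_{x\in K}|f(x)|\le 1\}$, your lower semi-continuity check (writing $\{f:P(f)\le c\}$ as an intersection of the closed sets $\{f:|f(x)|\le c\}$) matches the paper's closedness verification via pointwise evaluation, your collapse of a basic $\tau_{\mathcal{B}}^s$-neighborhood to a single closed $B\in\mathcal{B}$ is the same step, and your tent function is an explicit realization of the Urysohn function the paper uses, vanishing on $\text{Cl}_X(B^{\frac{1}{2m}})$ and positive at a point of $K\setminus B$. The only difference is packaging: you route through the equivalence $(1)\Leftrightarrow(4)$ of Theorem \ref{barreledness of flc topology} (barreled iff every lower semi-continuous seminorm is continuous), whereas the paper applies the definition of barreledness directly to the barrel $V_K$; the two are interchangeable via the standard correspondence between closed absolutely convex absorbing sets and lower semi-continuous seminorms.
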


\begin{proof}	Let $K$ be a compact subset of $X$. Consider  $V_K=\{f\in C(X): \sup_{x\in K}|f(x)|\leq 1\}$. Then $V_K$ is an absolutely convex and absorbing subset of $C(X)$. If $(f_\lambda)_{\lambda\in \Lambda}$ is a net in $V_K$ converging to $f$ in $(C(X), \tau_{\mathcal{B}}^s)$, then $f_\lambda(x)\to f(x)$ for every $x\in X$. For every $\epsilon>0$ and $x\in K$, there exists an $\lambda_o\in \Lambda$ such that $|f_{\lambda_0}(x)-f(x)|<\epsilon$. So $|f(x)|\leq 1$ as $|f(x)|\leq |f(x)-f_{\lambda}(x)|+|f_\lambda(x)|$ for every $\lambda\in\Lambda$. Consequently, $f\in V_K$. Therefore $V_K$ is closed in $(C(X), \tau_{\mathcal{B}}^s)$. Hence  $V_K$ is a barrel in $(C(X), \tau_{\mathcal{B}}^s)$. Since $(C(X), \tau_{\mathcal{B}}^s)$ is barreled,  $V_K$ is a neighborhood of $0$ in $C(X, \tau_{\mathcal{B}}^s)$. Therefore  $ U= \{f\in C(X): \rho_{B}^{s}(f)<\epsilon\}\subseteq V_K$ for some closed set $B \in \mathcal{B}$ and $\epsilon>0$. It is sufficient to show that $K\subseteq B$. Suppose $x_0\in K\setminus B$. Then there exists an $m\in\mathbb{N}$ such that $d(x_0, y)> \frac{1}{m}$ for every $y\in B$ as $B=\bigcap_{n\in\mathbb{N}} \{ x\in X : d(x, y)\leq \frac{1}{n} \text{ for some } y\in B\}$. It is easy to see that $\text{Cl}_X(B^{\frac{1}{2m}})\subseteq \{ x\in X : d(x, y)\leq \frac{1}{m} \text{ for some } y\in B\}$. So by Urysohn's lemma, there exists a continuous function $f$ on $X$ such that $f|_{\text{Cl}_X(B^{\frac{1}{2m}})}=0$ and $f(x_0)=2$. Consequently, $f\in U\setminus V_K$. We arrive at  a contradiction.  Therefore $K\subseteq B$. Hence  $K\in \mathcal{B}$.\end{proof}	

\begin{theorem}\label{barreledness of tauB} Let $\mathcal{B}$ be a bornology with a closed base on a metric space $(X, d)$. If $(C(X), \tau_{\mathcal{B}})$ is barreled, then the bornology $\mathcal{K}$ of all non-empty relatively compact subsets of $X$ is contained in $\mathcal{B}$.\end{theorem}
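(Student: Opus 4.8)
The plan is to mirror the proof of Theorem \ref{barreledness on tauBs}, exploiting that the neighborhood filter of $\tau_{\mathcal{B}}$ is even easier to work against than that of $\tau_{\mathcal{B}}^s$. I would fix a compact set $K\subseteq X$ and test barreledness against the same set $V_K=\{f\in C(X):\sup_{x\in K}|f(x)|\leq 1\}$. Since $K$ is compact, every $f\in C(X)$ is bounded on $K$, so $V_K$ is absorbing, and it is plainly absolutely convex. For closedness I would take a net $(f_\lambda)$ in $V_K$ with $f_\lambda\to f$ in $(C(X),\tau_{\mathcal{B}})$ and use that $\mathcal{B}$ covers $X$: each $x\in X$ lies in some $B\in\mathcal{B}$, on which convergence is uniform, hence $f_\lambda(x)\to f(x)$ pointwise; passing to the limit in the inequality $|f_\lambda(x)|\leq 1$ for $x\in K$ yields $f\in V_K$. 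Thus $V_K$ is a barrel, and barreledness of $(C(X),\tau_{\mathcal{B}})$ makes it a neighborhood of $0$.

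Next I would extract a basic neighborhood: there exist $B'\in\mathcal{B}$ and $\epsilon>0$ with $\{f:\rho_{B'}(f)<\epsilon\}\subseteq V_K$, and using the closed base I may replace $B'$ by a closed member $B\in\mathcal{B}$ containing it, since $\rho_B\geq\rho_{B'}$ shrinks the associated set. This gives $U=\{f:\rho_B(f)<\epsilon\}\subseteq V_K$ with $B$ closed. The crux is then to show $K\subseteq B$. Suppose not and pick $x_0\in K\setminus B$. Because $B$ is closed in the metric space, $d(x_0,B)>0$, so Urysohn's lemma (or the explicit function $x\mapsto\min\{2,\,2\,d(x,B)/d(x_0,B)\}$) provides a continuous $f$ with $f|_B=0$ and $f(x_0)=2$.

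This is exactly where the argument is shorter than in the strong case: since $\rho_B(f)=\sup_{x\in B}|f(x)|$ only samples points of $B$ itself, $f|_B=0$ already gives $\rho_B(f)=0<\epsilon$, so $f\in U$; yet $\sup_{x\in K}|f(x)|\geq f(x_0)=2>1$ forces $f\notin V_K$, contradicting $U\subseteq V_K$. In Theorem \ref{barreledness on tauBs} one had to annihilate $f$ on an entire enlargement $\text{Cl}_X(B^{\frac{1}{2m}})$ to defeat $\rho_B^s$, which required the quantitative choice of $m$ together with the metric estimate; none of that is needed here, so the Urysohn step reduces to the trivial separation of $x_0$ from the closed set $B$. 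This yields $K\subseteq B$, whence $K\in\mathcal{B}$ because $\mathcal{B}$ is closed under subsets; for an arbitrary non-empty relatively compact $A$, its closure $\overline{A}$ is compact, so $\overline{A}\in\mathcal{B}$ and therefore $A\in\mathcal{B}$, giving $\mathcal{K}\subseteq\mathcal{B}$. The only step demanding genuine care is the closedness of $V_K$, which rests entirely on the covering property of $\mathcal{B}$ forcing $\tau_{\mathcal{B}}$-convergent nets to converge pointwise on $K$.
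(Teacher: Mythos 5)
Your proof is correct and follows exactly the route the paper intends (its proof of this theorem is literally a reference back to the proof of Theorem \ref{barreledness on tauBs}): the same barrel $V_K$, the same extraction of a basic neighborhood $\{f:\rho_B(f)<\epsilon\}$ with $B$ closed, and the same Urysohn-type contradiction. Your observation that the quantitative enlargement $\text{Cl}_X(B^{\frac{1}{2m}})$ can be dropped because $\rho_B$ only samples points of $B$ is exactly the simplification that makes the $\tau_{\mathcal{B}}$ case easier, and your final passage from compact sets to relatively compact ones via downward closure of $\mathcal{B}$ is the right way to finish.
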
	

\begin{proof} It is similar to the proof of Theorem \ref{barreledness on tauBs}.\end{proof}	

\begin{corollary}\label{compact open topology coarser than  the finest topology} Let $\mathcal{B}$ be a bornology with closed base on a metric space $(X, d)$. Suppose  $\tau_F$ and $\tau_{F}^s$ are the flc topologies for the spaces $(C(X), \tau_{\mathcal{B}})$ and $(C(X), \tau_{\mathcal{B}}^s)$, respectively. If $(C(X), \tau_{\mathcal{B}})$ and $(C(X), \tau_{\mathcal{B}}^s)$ are barreled, then the compact open topology $\tau_{\mathcal{K}}$ is coarser than both $\tau_{F}$ and $\tau_{F}^s$. \end{corollary}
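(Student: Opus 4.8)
The plan is to reduce the corollary to the two preceding theorems together with the defining (maximality) property of the flc topology, the only genuinely new ingredient being the observation that the compact-open topology is itself locally convex. First I would note that the barreledness hypotheses do their work immediately: by Theorem \ref{barreledness of tauB} the barreledness of $(C(X),\tau_{\mathcal{B}})$ forces $\mathcal{K}\subseteq\mathcal{B}$, and by Theorem \ref{barreledness on tauBs} the barreledness of $(C(X),\tau_{\mathcal{B}}^s)$ gives the same inclusion. Thus the operative consequence of the hypotheses is simply that $\mathcal{K}\subseteq\mathcal{B}$. The crucial structural remark is that $\tau_{\mathcal{K}}$ is already a \emph{genuine} locally convex topology (not merely an extended one): it is induced by the family $\{\rho_K:K\in\mathcal{K}\}$ with $\rho_K(f)=\sup_{x\in K}|f(x)|$, and since every $f\in C(X)$ is bounded on the compact set $\text{Cl}_X(K)$, each $\rho_K$ is finite-valued, i.e. an ordinary seminorm. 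Hence $\tau_{\mathcal{K}}$ is a locally convex topology on $C(X)$.

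Next I would establish the two crude comparisons $\tau_{\mathcal{K}}\subseteq\tau_{\mathcal{B}}$ and $\tau_{\mathcal{K}}\subseteq\tau_{\mathcal{B}}^s$. Because $\mathcal{K}\subseteq\mathcal{B}$, uniform convergence on $\mathcal{K}$ is coarser than uniform convergence on $\mathcal{B}$, giving $\tau_{\mathcal{K}}\subseteq\tau_{\mathcal{B}}$. For the strong topology, observe that for each $B\in\mathcal{B}$ one has $B\subseteq B^\delta$ for every $\delta>0$, whence $\rho_B\le\rho_B^s$, so the family $\{\rho_B^s\}$ dominates $\{\rho_B\}$ and $\tau_{\mathcal{B}}\subseteq\tau_{\mathcal{B}}^s$; combined with the previous inclusion this yields $\tau_{\mathcal{K}}\subseteq\tau_{\mathcal{B}}\subseteq\tau_{\mathcal{B}}^s$. (Equivalently, since each $K\in\mathcal{K}\subseteq\mathcal{B}$, the seminorm $\rho_K^s$ belongs to the defining family of $\tau_{\mathcal{B}}^s$ and dominates $\rho_K$, which gives $\tau_{\mathcal{K}}\subseteq\tau_{\mathcal{B}}^s$ directly.)

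Finally I would invoke the defining property of the flc topology. Since $\tau_F$ is the finest locally convex topology coarser than $\tau_{\mathcal{B}}$, and $\tau_{\mathcal{K}}$ is a locally convex topology with $\tau_{\mathcal{K}}\subseteq\tau_{\mathcal{B}}$, maximality gives $\tau_{\mathcal{K}}\subseteq\tau_F$. Identically, $\tau_F^s$ being the finest locally convex topology coarser than $\tau_{\mathcal{B}}^s$ together with $\tau_{\mathcal{K}}\subseteq\tau_{\mathcal{B}}^s$ yields $\tau_{\mathcal{K}}\subseteq\tau_F^s$. This establishes that $\tau_{\mathcal{K}}$ is coarser than both $\tau_F$ and $\tau_F^s$. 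Should one prefer to avoid appealing to maximality directly, the same conclusion follows from Theorem 3.5 in \cite{flctopology}: each $\rho_K$ (with $K\in\mathcal{K}\subseteq\mathcal{B}$) is a continuous seminorm on $(C(X),\tau_{\mathcal{B}})$, hence continuous on $(C(X),\tau_F)$, so the topology $\tau_{\mathcal{K}}$ generated by these seminorms satisfies $\tau_{\mathcal{K}}\subseteq\tau_F$, and similarly for $\tau_F^s$.

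I do not expect a serious obstacle here, as the corollary is essentially an assembly of the earlier results. The one point requiring care — and the conceptual heart of the argument — is the recognition that $\tau_{\mathcal{K}}$ is itself locally convex, owing to the finiteness of the compact seminorms. It is precisely this fact that allows the universal property of the flc topology (or Theorem 3.5 in \cite{flctopology}) to upgrade the elementary inclusions $\tau_{\mathcal{K}}\subseteq\tau_{\mathcal{B}}$ and $\tau_{\mathcal{K}}\subseteq\tau_{\mathcal{B}}^s$ into inclusions inside the finest locally convex subtopologies $\tau_F$ and $\tau_F^s$.
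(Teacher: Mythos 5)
Your proof is correct and follows exactly the route the paper intends (the corollary is left unproved as an immediate consequence of Theorems \ref{barreledness on tauBs} and \ref{barreledness of tauB}): barreledness of both spaces yields $\mathcal{K}\subseteq\mathcal{B}$, each $\rho_K$ is then a finite-valued continuous seminorm on $(C(X),\tau_{\mathcal{B}})$ and on $(C(X),\tau_{\mathcal{B}}^s)$, and the universal property of the flc topology (equivalently, the preservation of continuous seminorms from Theorem 3.5 of \cite{flctopology}) gives $\tau_{\mathcal{K}}\subseteq\tau_F$ and $\tau_{\mathcal{K}}\subseteq\tau_F^s$. Your observation that $\tau_{\mathcal{K}}$ is a genuine (finite-valued) locally convex topology is indeed the one point that needs to be said, and you have said it correctly.
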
	

The next example shows that Theorem \ref{barreledness on tauBs} and Theorem \ref{barreledness of tauB} may not hold if $\mathcal{B}$ does not have any closed base in $X$.

\begin{example}\label{counter example for K not subset of B} Let $X=\mathbb{R}$ with the usual metric, and let $\mathcal{B}=\{ A\subseteq \mathbb{R}: A \text{ is a non-empty subset of } \mathbb{Q}\cup F \text{ for a finite subset } F \text{ of  } \mathbb{R}\}.$ Then $\mathcal{B}$ forms a bornology on $X$ without any closed base. Note that $A=[0, 1]$ is  a compact set in $X$ and $A\notin \mathcal{B}$. Also, it is easy to see that $\tau_{\mathcal{B}}=\tau_{\mathcal{B}}^s=\tau_u$, where $\tau_u$ is the topology of uniform convergence on $C(X)$ induced by the extended norm $\parallel f\parallel_{\infty}=\sup_{x\in X} |f(x)|$ for $f\in C(X)$. By Corollary \ref{barreledness of a uniform space}, $(C(X), \tau_{u})$ is barreled.\end{example}

The next example shows that if $\mathcal{B}$ is a bornology with a closed base on a metric space $(X, d)$, and both $(C(X), \tau_{\mathcal{B}})$ and $(C(X), \tau_{\mathcal{B}}^s)$ are barreled, then the inclusions in Corollary \ref{compact open topology coarser than  the finest topology} may be strict, that is, if $\mathcal{B}$ is a bornology with a closed base on a metric space $(X, d)$, then the compact open topology $\tau_{\mathcal{K}}$ may not be equal to the flc topologies $\tau_F$ and $\tau_F^s$ for the spaces $(C(X), \tau_{\mathcal{B}})$ and $(C(X), \tau_{\mathcal{B}}^s)$, respectively.

\begin{example}\label{example for strict inculsion of compact open topology and flc topology}
	Let $(X, d)$ be a discrete metric space	such that $X=\{x_n:n\in\mathbb{N}\}$. If $\mathcal{B}$ is the collection of all non-empty subsets of $X$, then both $\tau_{\mathcal{B}}$ and $\tau_{\mathcal{B}}^s$ are induced by the extended norm $\parallel f\parallel_{\infty}=\sup_{x\in X} |f(x)|$ for $f\in C(X)$. By Corollary \ref{barreledness of a uniform space},  $(C(X), \parallel\cdot\parallel_{\infty})$ is barreled. Let $\tau_F$ be the flc topology for $(C(X), \parallel\cdot\parallel_{\infty})$. We show that $\tau_{\mathcal{K}}\neq \tau_F$. For  $m\in\mathbb{N}$, define a function $f_m$ on $X$ by 
	\[f_m(x_n)=\begin{cases}
		\text{$1$,} &\quad\text{ if $m=n$ }\\
		\text{0,} &\quad\text{ if $m\neq n$.}\\
	\end{cases}\] 
	Since $(X, d)$ is a discrete space, $f_m\in C(X)$ for every $m\in\mathbb{N}$. Also, it is easy to see that $f_m\to 0$ in $(C(X), \tau_{\mathcal{K}})$. Let $M= \text{span}(\{f_m:m\in\mathbb{N}\})$. Then $(M, \parallel\cdot\parallel_\infty)$ is a normed linear space. Since $\parallel f_m\parallel_\infty=1$ for every $m\in\mathbb{N}$, $f_m\nrightarrow 0$ in $(M, \parallel\cdot\parallel_\infty)$. Thus $f_m\nrightarrow 0$ in $(C(X), \tau_F)$ as $\tau_F|_M=\tau_{\parallel\cdot\parallel_\infty}$ on $M$.
\end{example}

\section{uniform boundedness principle}	

In this section, we establish the uniform boundedness principle for a barreled extended locally convex space. In order to do that we use the barreledness of the corresponding finest space. We first relate the continuity of a linear operator between extended locally convex spaces to the continuity of the same under the corresponding finest spaces. Then we extend this result to a equicontinuous family of continuous linear operators.

The following result follows from Proposition 4.8 of \cite{esaetvs}.
\begin{proposition}\label{continuity conditions for a linear operator} Let $(X, \tau), (Y, \sigma)$ be two extended locally convex spaces. Suppose $T:X\rightarrow Y$ is a linear map. Then $T$ is continuous if and only if for every continuous extended seminorm $q$ on $Y$, there exists a continuous extended seminorm $p$ on $X$ such that $q(T(x))\leq p(x)$ for every $x\in X$. \end{proposition}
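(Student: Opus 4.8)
The plan is to prove the two implications separately, using the fact that in an elcs the sets $\{x:\rho(x)<\epsilon\}$, where $\rho$ ranges over the continuous extended seminorms and $\epsilon>0$, form a neighborhood base at the origin. This is immediate from the definition of $\tau$ as the topology induced by a family of extended seminorms, together with the observation that a finite intersection $\bigcap_{i=1}^n\{x:\rho_i(x)<\epsilon_i\}$ contains $\{x:\rho(x)<\epsilon\}$ for $\rho=\max_i\rho_i$ and $\epsilon=\min_i\epsilon_i$, and that $\max_i\rho_i$ is again a continuous extended seminorm.

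For the necessity direction, suppose $T$ is continuous and let $q$ be any continuous extended seminorm on $(Y,\sigma)$. I would observe that the composite $q\circ T:X\to[0,\infty]$ is continuous, being a composition of continuous maps, and is itself an extended seminorm on $X$: the homogeneity $q(T(\alpha x))=|\alpha|\,q(T(x))$ and subadditivity $q(T(x+y))\le q(T(x))+q(T(y))$ follow from linearity of $T$ and the seminorm axioms for $q$. Setting $p=q\circ T$ then yields a continuous extended seminorm on $X$ satisfying $q(T(x))=p(x)\le p(x)$ for every $x\in X$, which is exactly the required estimate.

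For the sufficiency direction, since $T$ is linear it suffices to verify continuity at $0$. Given a neighborhood $W$ of $T(0)=0$ in $(Y,\sigma)$, I would use the neighborhood base above to reduce to a single continuous extended seminorm: choose a continuous extended seminorm $q$ on $Y$ and $\epsilon>0$ with $\{y:q(y)<\epsilon\}\subseteq W$. By hypothesis there is a continuous extended seminorm $p$ on $X$ with $q(T(x))\le p(x)$ for all $x$. Then $\{x:p(x)<\epsilon\}$ is a neighborhood of $0$ in $X$ containing $0$, and for any $x$ in it we have $q(T(x))\le p(x)<\epsilon$, so that $T(\{x:p(x)<\epsilon\})\subseteq\{y:q(y)<\epsilon\}\subseteq W$. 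This gives continuity at $0$, hence everywhere.

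Rather than a genuine obstacle, the one point to watch is that extended seminorms take values in $[0,\infty]$, so I would confirm that the value $\infty$ disrupts none of the steps: the composite $p=q\circ T$ remains a well-defined continuous extended seminorm even where it equals $\infty$, and in the preimage argument the inequality $q(T(x))\le p(x)$ combined with $p(x)<\epsilon<\infty$ is precisely what forces $q(T(x))<\epsilon$. I would also record that $\max$ and composition preserve both the extended-seminorm axioms and continuity, which are routine verifications given the conventions for $\infty$ fixed in Section~2.
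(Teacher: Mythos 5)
Your proof is correct, and it takes a genuinely different route from the paper, because the paper offers no argument at all: Proposition \ref{continuity conditions for a linear operator} is stated there as an immediate consequence of Proposition 4.8 of \cite{esaetvs}. Your version is the natural self-contained proof. In the necessity direction the choice $p=q\circ T$ is exactly right: linearity of $T$ makes $q\circ T$ an extended seminorm, continuity of $T$ and of $q$ makes it continuous, and the required estimate holds with equality. In the sufficiency direction the ball argument is the standard one. What your approach buys is independence from the cited literature; what the citation buys the authors is brevity and the avoidance of foundational fussiness about how the elcs topology is generated. On that last point, two of your steps are better anchored to the paper's Section 2 than to the phrase ``smallest topology under which each $\rho_i$ is continuous'': (i) the fact that the balls $\{x\in X:\rho(x)<\epsilon\}$, with $\rho$ a continuous extended seminorm, form a neighborhood base at $0$ follows cleanly from the stated facts on Minkowski functionals (every absolutely convex neighborhood $U$ of $0$ satisfies $\{x\in X:\rho_U(x)<1\}\subseteq U$ with $\rho_U$ continuous, and such $U$ form a base); and (ii) the reduction of continuity to continuity at $0$ uses that translations are homeomorphisms, i.e., that the elcs topology is translation invariant, which is part of the structure developed in \cite{esaetvs} (the extended seminorm balls centered at an arbitrary point form a neighborhood base there) but is not literally contained in the one-line definition quoted in this paper; read literally as an initial topology, that definition would not even make the discrete extended norm induce the discrete topology. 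Neither point is a gap; both are one-sentence justifications that make your argument airtight within this paper's framework.
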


\begin{corollary}
Let $(X, \tau), (Y, \sigma)$ be two extended locally convex spaces. Suppose $T:X\rightarrow Y$ is a continuous linear map. Then for every continuous extended seminorm $q$ on $(Y, \sigma)$, there exists a continuous extended seminorm $p$ on $(X, \tau)$ such that $T^{-1}\left( Y_{fin}^q\right)\supseteq X_{fin}^p.$
\end{corollary}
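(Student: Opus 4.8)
The plan is to apply Proposition \ref{continuity conditions for a linear operator} directly. That proposition already encodes the continuity of $T$ as a seminorm domination inequality, and the desired set inclusion is then simply read off from the finiteness of the dominating seminorm.

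First I would fix an arbitrary continuous extended seminorm $q$ on $(Y,\sigma)$. Since $T$ is continuous, Proposition \ref{continuity conditions for a linear operator} supplies a continuous extended seminorm $p$ on $(X,\tau)$ satisfying $q(T(x))\leq p(x)$ for every $x\in X$. This $p$ is precisely the seminorm asserted in the statement, so the only remaining task is to verify the inclusion.

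Next I would unwind the definitions $X_{fin}^p=\{x\in X:p(x)<\infty\}$ and $Y_{fin}^q=\{y\in Y:q(y)<\infty\}$ and check that $X_{fin}^p\subseteq T^{-1}(Y_{fin}^q)$. Take any $x\in X_{fin}^p$, so that $p(x)<\infty$. The domination inequality then gives $q(T(x))\leq p(x)<\infty$, which means $T(x)\in Y_{fin}^q$, equivalently $x\in T^{-1}(Y_{fin}^q)$. Since $x$ was arbitrary, this establishes $T^{-1}(Y_{fin}^q)\supseteq X_{fin}^p$, as required.

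There is no substantive obstacle here: essentially all the content lives in Proposition \ref{continuity conditions for a linear operator}, and what remains is the elementary observation that a seminorm bounded above by a finite value is finite, combined with the definition of the preimage. The only point meriting a moment's care is to track the direction of the inequality, and hence of the inclusion, correctly, since $p$ dominates $q\circ T$ rather than the reverse.
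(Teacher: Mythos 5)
Your proof is correct and follows exactly the route the paper intends: the corollary is stated as an immediate consequence of Proposition \ref{continuity conditions for a linear operator}, with the inclusion $X_{fin}^p\subseteq T^{-1}\left(Y_{fin}^q\right)$ read off from the domination $q(T(x))\leq p(x)$ just as you did. Nothing is missing.
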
	

\begin{theorem}\label{continuity of linear operator in finest and extended space} Let $(X, \tau), (Y, \sigma)$ be two extended locally convex spaces. Suppose $T:X\rightarrow Y$ is a linear map such that for every continuous extended seminorm $q$ on $(Y, \sigma)$, there exists a continuous extended seminorm $p$ on $(X, \tau)$ such that $$T^{-1}\left( Y_{fin}^q\right)\supseteq X_{fin}^p.$$ Then the following statements are equivalent.
\begin{itemize}
\item[(1)] $T:(X, \tau)\rightarrow (Y,\sigma)$ is continuous.
\item[(2)] $T:(X, \tau{_{F}})\rightarrow (Y,\sigma{_{F}})$ is continuous, where $\tau_{F}$ and $\sigma_{F}$ are the flc topologies for the spaces $(X, \tau)$ and $(Y, \sigma)$, respectively.\end{itemize}
\end{theorem}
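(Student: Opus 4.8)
The plan is to verify each implication through the seminorm criterion of Proposition \ref{continuity conditions for a linear operator}, using throughout two facts from \cite{flctopology}: that an elcs and its associated finest space have exactly the same continuous (finite-valued) seminorms (Theorem 3.5), and that the flc topology of a subspace is the restriction of the flc topology of the ambient space (Theorem 4.3). The standing hypothesis $T^{-1}(Y_{fin}^q)\supseteq X_{fin}^p$ will be invoked only in the harder direction, where it supplies the clopen finite subspace on which one can pass to finite seminorms.

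For $(1)\Rightarrow(2)$ I would first record that on the genuine locally convex space $(Y,\sigma_F)$ every continuous extended seminorm is automatically finite-valued: if $q$ is such a seminorm, then $\{y:q(y)<1\}$ is a neighborhood of $0$ in the topological vector space $(Y,\sigma_F)$, hence absorbing, which forces $q(y)<\infty$ for all $y$ (equivalently, $Y_{fin}^q$ is a clopen subspace of the connected space $(Y,\sigma_F)$, so it equals $Y$). Thus, to apply Proposition \ref{continuity conditions for a linear operator} to the pair $(X,\tau_F),(Y,\sigma_F)$, it suffices to treat an arbitrary $\sigma_F$-continuous \emph{seminorm} $q$. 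Such a $q$ is $\sigma$-continuous, so $q\circ T$ is a continuous finite-valued seminorm on $(X,\tau)$, hence $\tau_F$-continuous; since $q(Tx)\le (q\circ T)(x)$, Proposition \ref{continuity conditions for a linear operator} yields the continuity of $T\colon(X,\tau_F)\to(Y,\sigma_F)$. I note this direction does not use the standing hypothesis.

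The substance lies in $(2)\Rightarrow(1)$. Fix a $\sigma$-continuous extended seminorm $q$ on $Y$; by Proposition \ref{continuity conditions for a linear operator} I must produce a $\tau$-continuous extended seminorm $p$ on $X$ with $q(Tx)\le p(x)$ for all $x$. The hypothesis furnishes a $\tau$-continuous extended seminorm $p_0$ with $T(X_{fin}^{p_0})\subseteq Y_{fin}^q$, so $q\circ T$ is finite on the clopen subspace $X_{fin}^{p_0}$. The idea is to show $q\circ T$ is $\tau$-continuous on $X_{fin}^{p_0}$ and then extend it by $\infty$. Since $T\colon(X,\tau_F)\to(Y,\sigma_F)$ is continuous and carries $X_{fin}^{p_0}$ into $Y_{fin}^q$, its restriction $T_0$ is continuous from $X_{fin}^{p_0}$ (with the $\tau_F$-subspace topology) to $Y_{fin}^q$ (with the $\sigma_F$-subspace topology). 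Now $q|_{Y_{fin}^q}$ is a continuous finite-valued seminorm on $Y_{fin}^q$ with its $\sigma$-subspace topology; because the flc topology of this subspace is the restriction of $\sigma_F$ (Theorem 4.3 of \cite{flctopology}) and the subspace shares its continuous seminorms with its finest space (Theorem 3.5 of \cite{flctopology}), $q|_{Y_{fin}^q}$ is also continuous for the $\sigma_F$-subspace topology. Composing, $q\circ T=q|_{Y_{fin}^q}\circ T_0$ is a continuous finite-valued seminorm on $X_{fin}^{p_0}$ for the $\tau_F$-subspace topology, and applying the same two facts to $X_{fin}^{p_0}$ shows it is continuous for the $\tau$-subspace topology.

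Finally I would set $p\colon X\to[0,\infty]$ equal to $q\circ T$ on $X_{fin}^{p_0}$ and equal to $\infty$ elsewhere. A short check (using that $X_{fin}^{p_0}$ is a subspace, so a sum falls outside it unless both summands do) shows $p$ is an extended seminorm with $X_{fin}^p=X_{fin}^{p_0}$; since this set is clopen in $(X,\tau)$ and $p$ restricts to a continuous seminorm on it, $p$ is a continuous extended seminorm on $(X,\tau)$. As $q(Tx)\le p(x)$ everywhere (with equality on $X_{fin}^{p_0}$, and $p=\infty\ge q(Tx)$ off it), Proposition \ref{continuity conditions for a linear operator} delivers the continuity of $T\colon(X,\tau)\to(Y,\sigma)$. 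I expect the main obstacle to be precisely this localization: the seminorm $q$ is genuinely extended, so the flc machinery—which only sees finite seminorms—cannot be applied to $q$ directly, and one must first descend to the clopen finite subspaces $Y_{fin}^q$ and $X_{fin}^{p_0}$ (the latter provided exactly by the hypothesis) before the ``same continuous seminorms'' principle becomes usable; reassembling this localized data into a single continuous extended seminorm on all of $X$ is the technical crux.
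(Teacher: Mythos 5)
Your proof is correct, and while it rests on the same two pillars as the paper's --- the seminorm criterion of Proposition \ref{continuity conditions for a linest space} is not a real label, so to be precise: Proposition \ref{continuity conditions for a linear operator}, together with Theorem 3.5 of \cite{flctopology} (an elcs and its finest space have the same continuous finite-valued seminorms) --- the mechanisms differ in both directions. For $(1)\Rightarrow(2)$ your argument is in fact shorter: you observe that $q\circ T$ is itself a finite-valued $\tau$-continuous seminorm and feed it to Theorem 3.5 directly, whereas the paper first extracts an extended seminorm $p\geq q\circ T$ from Proposition \ref{continuity conditions for a linear operator} and then repairs it into a finite seminorm $\rho(x)=p(x_f)+q(T(x_m))$ via a decomposition $X=X_{fin}^p\oplus M$; your preliminary remark that continuous extended seminorms on a genuine locally convex space are automatically finite is the reduction the paper leaves implicit. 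For $(2)\Rightarrow(1)$ the paper stays global: choosing $N$ with $Y=Y_{fin}^q\oplus N$, it defines $\mu(y_f+y_n)=q(y_f)$, notes $\mu\leq q$ so $\mu$ is a finite $\sigma$-continuous, hence $\sigma_F$-continuous, seminorm, pulls it back through the flc-continuity of $T$ to get a finite $\tau_F$-continuous $\rho$ with $\mu\circ T\leq\rho$, and takes $p+\rho$ as the dominating extended seminorm (the case $q(Tx)=\infty$ being absorbed by $p(x)=\infty$, exactly the hypothesis). You instead stay local: you restrict $T$ to the clopen subspaces $X_{fin}^{p_0}\to Y_{fin}^q$, transfer the continuity of $q|_{Y_{fin}^q}$ from $\sigma|_{Y_{fin}^q}$ to $\sigma_F|_{Y_{fin}^q}$ by invoking Theorem 4.3 of \cite{flctopology} (subspace compatibility of the flc topology) together with Theorem 3.5 on the subspace elcs, and then glue $q\circ T|_{X_{fin}^{p_0}}$ with the value $\infty$ outside to manufacture the dominating extended seminorm; the verification that this glued function is a continuous extended seminorm with $X_{fin}^p=X_{fin}^{p_0}$ is sound. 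Two remarks on economy. First, your appeal to Theorems 4.3/3.5 on the domain side is superfluous: $\tau_F|_{X_{fin}^{p_0}}\subseteq\tau|_{X_{fin}^{p_0}}$, so $\tau_F$-subspace continuity trivially gives $\tau$-subspace continuity; the only genuine use of Theorem 4.3 is on the $Y$ side. Second, even that use can be eliminated by the paper's projection trick --- replace $q|_{Y_{fin}^q}$ by $\mu(y_f+y_n)=q(y_f)$, which is $\sigma_F$-continuous by Theorem 3.5 applied to $(Y,\sigma)$ itself and agrees with $q$ on $Y_{fin}^q$ --- after which your gluing argument runs verbatim. In short: your route buys a clean, reusable extension-by-$\infty$ device and avoids direct-sum bookkeeping on the domain; the paper's route buys independence from the subspace-compatibility theorem, using only Theorem 3.5 on the ambient spaces.
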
	

\begin{proof}$(1)\Rightarrow(2)$. Suppose $q$ is a continuous seminorm on $(Y, \sigma_{F})$. Then $q$ is a continuous with respect to $(Y, \sigma)$. Since  $T:(X, \tau)\rightarrow (Y,\sigma)$ is continuous, by Proposition \ref{continuity conditions for a linear operator}, there exists a continuous extended seminorm $p$ on $(X, \tau)$ such that $q(T(x))\leq p(x)$ for all $x\in X$. Let $M$ be a subspace of $X$ such that $X=X_{fin}^p\oplus M$. Consider the seminorm	$\rho:X\rightarrow [0,\infty)$ defined by $$\rho(x)=p(x_f)+q(T(x_m)),$$ where $x=x_f+x_m$ for $x_f\in X_{fin}^p$ and $x_m\in M.$	 Since $\rho(x)\leq p(x)$ for every $x\in X$, $\rho$ is a continuous seminorm on $(X,\tau)$. By Theorem 3.5 in \cite{flctopology}, $\rho$ is continuous on $(X, \tau_{F})$. It is easy to prove that $q(T(x))\leq \rho(x)$ for $x\in X$. Therefore by Theorem 5.7.3,  p. 126 in \cite{tvsnarici}, $T:(X, \tau{_{F}})\rightarrow (Y,\sigma{_{F}})$ is continuous. 
	
$(2)\Rightarrow (1)$. Suppose $q$ is a continuous extended seminorm on $(Y, \sigma)$. Then by our assumptions, there exists a continuous extended seminorm $p$ on $(X, \tau)$ such that  $T^{-1}\left(Y_{fin}^q \right)\supseteq X_{fin}^{p}$. Let $N$ be a subspace of $Y$ such that $Y=Y_{fin}^q\oplus N$. Define a seminorm $\mu$ on $Y$ by $\mu(y)=q(y_f)$, where $y=y_f+y_n$ for $y_f\in Y_{fin}^q$  and $y_n\in N$. Since $\mu(y)\leq q(y)$ for every $y\in Y$, $\mu$ is a continuous seminorm on $(Y, \sigma)$. By  Theorem 3.5 in \cite{flctopology}, $\mu$ is continuous on $(Y, \sigma_{F})$. Since $T:(X, \tau{_{F}})\rightarrow (Y,\sigma_{F})$ is continuous, by Theorem 5.7.3, p. 126 in \cite{tvsnarici}, there exists a continuous seminorm $\rho$ on $(X, \tau_{F})$ such that $\mu(T(x))\leq \rho(x)$ for every $x\in X$. It is easy to see that $p+\rho$ is a continuous extended seminorm on $(X, \tau)$. Note that if $q(T(x))=\infty$, then $p(x)=\infty$. Otherwise, $q(T(x))=\mu(T(x))\leq \rho(x)$. Therefore $$q(T(x))\leq (p+\rho)(x) \text{ for every } x\in X.$$ 
Hence $T:(X, \tau)\rightarrow (Y,\sigma)$ is continuous.	\end{proof}

\begin{corollary}Let $(X, \tau), (Y, \sigma)$ be two barreled extended locally convex spaces. Suppose $T:X\rightarrow Y$ is a linear map such that for every continuous extended seminorm $q$ on $Y$, there exists a continuous extended seminorm $p$ on $X$ such that $$T^{-1}\left( Y_{fin}^q\right)\supseteq X_{fin}^p.$$ Then the following statements are equivalent.
\begin{itemize}
\item[(1)] $T:(X, \tau)\rightarrow (Y,\sigma)$ is continuous.
\item[(2)] $T:(X, \tau{_{w}})\rightarrow (Y,\sigma{_{w}})$ is continuous, where $\tau_{w}$ and $\sigma_{w}$ are the weak topologies for the spaces $(X, \tau)$ and $(Y, \sigma)$, respectively.\end{itemize}\end{corollary}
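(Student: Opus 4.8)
The plan is to reduce everything to the associated finest locally convex spaces, where the classical duality theory of Mackey spaces is available, and then transport the conclusion back via Theorem \ref{continuity of linear operator in finest and extended space}. The starting observation is that since $(X,\tau)$ and $(X,\tau_F)$ share the same dual $X^*$ (and likewise $(Y,\sigma)$ and $(Y,\sigma_F)$ share $Y^*$), the weak topology $\tau_w=\sigma(X,X^*)$ is simultaneously the weak topology of the elcs $(X,\tau)$ and of the locally convex space $(X,\tau_F)$; the same holds for $\sigma_w$. Consequently statement (2) is exactly the assertion that $T$ is weakly continuous as a map between the dual pairs $\langle X,X^*\rangle$ and $\langle Y,Y^*\rangle$.

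First I would record that, because $(X,\tau)$ and $(Y,\sigma)$ are barreled, Corollary \ref{barreled implies mackey space} makes $(X,\tau_F)$ and $(Y,\sigma_F)$ Mackey locally convex spaces, so in particular $\tau_F=\mu(X,X^*)$. This is the only place the barreledness hypothesis enters, and it is the crux: the Mackey property of the domain $(X,\tau_F)$ is precisely what upgrades weak continuity to genuine continuity. (Barreledness of $Y$ is not strictly needed here, since compatibility of $\sigma_F$ with $\langle Y,Y^*\rangle$ already gives $\sigma_F\subseteq\mu(Y,Y^*)$, but it is harmless.)

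Next, working entirely inside the locally convex spaces $(X,\tau_F)$ and $(Y,\sigma_F)$, I would invoke the classical result on weakly continuous linear maps: a continuous linear map between locally convex spaces is always weakly continuous, and conversely a weakly continuous linear map whose domain carries its Mackey topology is continuous (see, e.g.\ \cite{tvsnarici}). Since $\tau_F=\mu(X,X^*)$ by the previous step and $\sigma_F$ is compatible with $\langle Y,Y^*\rangle$, this produces the equivalence: $T:(X,\tau_F)\to(Y,\sigma_F)$ is continuous if and only if $T:(X,\tau_w)\to(Y,\sigma_w)$ is continuous.

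Finally, I would combine this with Theorem \ref{continuity of linear operator in finest and extended space}, which, thanks to the hypothesis that for every continuous extended seminorm $q$ on $Y$ there is a continuous extended seminorm $p$ on $X$ with $T^{-1}(Y_{fin}^q)\supseteq X_{fin}^p$, asserts that $T:(X,\tau)\to(Y,\sigma)$ is continuous if and only if $T:(X,\tau_F)\to(Y,\sigma_F)$ is continuous. Chaining the two equivalences yields (1)$\Leftrightarrow$(2). I expect the only genuinely delicate point to be the bookkeeping that $\tau_w$ and $\sigma_w$ really are the weak topologies of the \emph{finest} spaces, and not merely of the extended ones, so that the Mackey machinery applies; once the same-dual identity is in hand, the remainder is just assembling the cited results.
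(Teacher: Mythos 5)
Your proposal is correct and follows essentially the same route as the paper: the paper's proof is a one-line citation of Corollary 11.3.7 in \cite{tvsnarici} (continuity $\Leftrightarrow$ weak continuity when the domain is Mackey/barreled), Theorem \ref{barreledness of flc topology} (transfer of barreledness to the flc space), and Theorem \ref{continuity of linear operator in finest and extended space} (the bridge between $\tau$ and $\tau_F$), which is exactly the chain you assemble, including the same-dual identification of the weak topologies. Your added remark that barreledness of $Y$ is dispensable (only compatibility of $\sigma_F$ with $\langle Y, Y^*\rangle$ is used) is a correct refinement, not a deviation.
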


\begin{proof}It follows from Corollary 11.3.7, p. 385 in \cite{tvsnarici}, Theorem \ref{barreledness of flc topology}, and Theorem \ref{continuity of linear operator in finest and extended space}.\end{proof}

In general, we may not drop the condition $T^{-1}\left( Y_{fin}^q\right) \supseteq X_{fin}^p$ in Theorem \ref{continuity of linear operator in finest and extended space}.

\begin{example}	Let $X=Y=\mathbb{R}^2$. If $\tau$ is the topology induced by  $\parallel(x, y)\parallel=|x|+|y|$ for $(x, y)\in X$, and $\sigma $ is the topology induced by the discrete extended norm, then $X_{fin}=\mathbb{R}^2$ in $(X, \tau)$ and $Y_{fin}=\{(0, 0)\}$ in $(Y, \sigma)$. Note that $\tau_{F}=\sigma_{F}$ as the dimension of $X$ is finite. Then the identity map $I:(X,\tau_{F})\rightarrow (Y,\sigma_{F})$ is continuous but $I:(X,\tau)\rightarrow (Y,\sigma) $ is not continuous.  \end{example}
 
 To define a pointwise bounded family of continuous linear operators, we recall the definition of a bounded set in an elcs and some of its basic properties from \cite{flctopology, doelcs}.	
\begin{definition}\label{bounded set in an elcs}\normalfont We say a subset $A$  of an elcs $(X, \tau)$ is bounded if for every neighborhood $U$ of $0$, there exist a finite subset $F$ of $X$ and $r>0$ such that $A\subseteq F+rU$. \end{definition}

\begin{proposition} Suppose $(X, \tau)$ is an elcs. Then the following statements hold.
	\begin{itemize}
		\item[(1)] The collection of all bounded sets in an elcs forms a bornology. 
		\item[(2)] $A$ is bounded in $(X, \tau)$ if and only if for every neighborhood $U$ of $0$, there exist a finite subset $F$ of $A$ and $r>0$ such that $A\subseteq F+rU$.
		\item[(3)] Suppose $(Y,\sigma)$ is an elcs and $T:(X,\tau)\rightarrow (Y,\sigma)$ is a  continuous linear operator. Then for each bounded set $A$ in $(X,\tau)$, $T(A)$ is bounded in $(Y,\sigma)$.
		\item[(4)] No subspace $\left( \text{except } \{0 \}\right)$ of $X$ is bounded.
		\item[(5)] If $(x_n)$ converges to $0$ in $(X,\tau)$, then $\{x_n:n\in \mathbb{N}\}$ is  bounded in  $(X, \tau)$.
	    \item[(6)]  For a conventional locally convex space $(X, \tau)$, $A\subseteq X$ is bounded in the sense of Definition \ref{bounded set in an elcs} if and only if it is absorbed by each neighborhood of $0$ in $(X, \tau)$. \end{itemize}
\end{proposition}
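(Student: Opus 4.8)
The plan is to prove each of the six statements in turn, treating them as largely independent. Most parts reduce to unwinding Definition \ref{bounded set in an elcs} together with the basic topology of an elcs established in Section 2, so I would organize the proof so that the genuinely nontrivial part, statement (2), comes early and is then reused.

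For statement (1), I would verify the bornology axioms directly: the singleton $\{0\}$ is bounded, so the collection covers $X$; a subset of a bounded set is bounded because the defining inclusion $A\subseteq F+rU$ is inherited by any subset; and for a finite union $A_1\cup A_2$, given a neighborhood $U$ of $0$ I would choose $F_i,r_i$ with $A_i\subseteq F_i+r_iU$ and then take $F=F_1\cup F_2$ and $r=\max\{r_1,r_2\}$, using that $r_iU\subseteq rU$ when $U$ is balanced (fact (1) of Section 2 guarantees an absolutely convex neighborhood base). Statement (2) is the substantive one and the main obstacle: I must upgrade ``$F$ is a finite subset of $X$'' to ``$F$ is a finite subset of $A$.'' Given $A\subseteq F+rU$ with $F\subseteq X$ arbitrary, I would fix an absolutely convex neighborhood $V$ with $V+V\subseteq U$, write $A\subseteq F+rV$, and for each $f\in F$ for which the translate $(f+rV)\cap A$ is nonempty pick a point $a_f\in(f+rV)\cap A$; then $f+rV\subseteq a_f+rV+rV\subseteq a_f+2rV$, so $A$ is covered by finitely many translates $a_f+2rV$ with $a_f\in A$, giving $A\subseteq F'+2rU$ with $F'=\{a_f\}\subseteq A$ finite. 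The delicate point is handling which translates actually meet $A$ and confirming the absorbing/absolutely convex neighborhood base makes the radius bookkeeping work; this is where I expect to spend the most care.

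Statements (3)--(5) then follow quickly. For (3), given a neighborhood $W$ of $0$ in $(Y,\sigma)$, continuity of $T$ yields a neighborhood $U$ of $0$ in $(X,\tau)$ with $T(U)\subseteq W$; applying (2) to write $A\subseteq F+rU$ with $F\subseteq A$ finite and pushing forward gives $T(A)\subseteq T(F)+rW$ with $T(F)$ a finite subset of $T(A)$, so $T(A)$ is bounded. For (4), if $Y$ is a nonzero subspace and $x\in Y$ with $x\neq 0$, Hausdorffness produces a continuous extended seminorm $\rho$ with $\rho(x)>0$; then the neighborhood $U=\{z:\rho(z)<1\}$ cannot satisfy $Y\subseteq F+rU$ for any finite $F$ and $r>0$, since $Y$ contains $nx$ for all $n$ while $F+rU$ meets the line $\mathrm{span}(x)$ in a bounded set, yielding a contradiction. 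For (5), convergence of $(x_n)$ to $0$ means that for any neighborhood $U$ all but finitely many $x_n$ lie in $U$, so taking $F$ to be the finite exceptional set and $r=1$ gives $\{x_n\}\subseteq F+U$.

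Finally, for statement (6), on a conventional locally convex space every neighborhood of $0$ is absorbing, so I would show the two notions of boundedness coincide. If $A$ is absorbed by every neighborhood of $0$, then for each $U$ there is $r>0$ with $A\subseteq rU$, which is the Definition \ref{bounded set in an elcs} condition with $F=\{0\}$. Conversely, if $A\subseteq F+rU$ for some finite $F$ and $r>0$, I would use that in the locally convex setting the finite set $F$ is itself bounded (being absorbed by each neighborhood) and that sums of bounded sets are bounded, so $A$ is absorbed by each neighborhood of $0$; here the essential input absent in the genuinely extended case is precisely that every neighborhood is absorbing, which forces the finite translation term to be harmless. I would remark that this is exactly the feature that fails for general elcs and motivates the $F$-term in Definition \ref{bounded set in an elcs}.
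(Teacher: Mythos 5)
The paper does not actually prove this proposition: it is stated as a recollection of basic facts from \cite{flctopology, doelcs}, so there is no in-paper argument to compare yours against, and your proposal has to stand on its own. On its own terms it is essentially correct, and it supplies exactly the arguments one would expect. The only genuinely nontrivial item, part (2), is handled by the right device: pass to an absolutely convex neighborhood $V$ of $0$ with $V+V\subseteq U$ (available because the topology is generated by extended seminorms: if $\{x:\rho(x)<\epsilon\}\subseteq U$, take $V=\{x:\rho(x)<\epsilon/2\}$), cover $A$ by finitely many translates $f+rV$, and replace each $f$ whose translate meets $A$ by a representative $a_f\in(f+rV)\cap A$, so that $f+rV\subseteq a_f+r(V+V)\subseteq a_f+rU$; this upgrade of ``$F\subseteq X$'' to ``$F\subseteq A$'' is the standard argument and your radius bookkeeping is fine. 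Parts (3) and (6) are sound as written. In part (4), the phrase ``$F+rU$ meets $\mathrm{span}(x)$ in a bounded set'' should be expanded into the underlying pigeonhole estimate --- if $\rho(nx-f)<r$ and $\rho(mx-f)<r$ for the same $f\in F$, then $|n-m|\,\rho(x)<2r$, which fails for large $|n-m|$ since $\rho(x)>0$ --- but that is exactly the idea you are gesturing at, and it works whether $\rho(x)$ is finite or infinite. Two trivial slips should be repaired. In (1), boundedness of the single set $\{0\}$ does not give that the bounded sets cover $X$; you need every singleton $\{x\}$ to be bounded, which holds by the same observation ($\{x\}\subseteq\{x\}+rU$ since $0\in U$). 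In (5), the inclusion $\{x_n:n\in\mathbb{N}\}\subseteq F+U$ requires $0\in F$, because the terms lying in $U$ are of the form $0+u$; so take $F$ to be the finite exceptional set together with $0$. Neither slip affects the structure of the proof.
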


Let $\mathcal{L}(X,Y)$ denote the collection of all continuous linear maps from an elcs $(X, \tau)$ to an elcs $(Y, \sigma)$.  Then $\mathcal{A}\subseteq \mathcal{L}(X,Y)$ is said to be \textit{pointwise bounded} if for every $x\in X$, $\{Tx: T\in\mathcal{A}\}$ is bounded in $(Y, \sigma)$. 

\begin{definition}\label{equicontinuous family} \normalfont Suppose $(X, \tau)$ and $(Y, \sigma)$ are extended locally convex spaces. Then $\mathcal{A}\subseteq \mathcal{L}(X, Y)$ is said to be \textit{equicontinuous} in $\mathcal{L}(X, Y)$ if for every neighborhood $V$ of $0$ in $(Y, \sigma)$, there exists a neighborhood $U$ of $0$ in $(X, \tau)$ such that $T(U)\subseteq V \text{ for } T\in\mathcal{A}$. \end{definition}

\noindent The following facts regarding equicontinuity of a subset $A$ of $X^*$ are easy to verify (\cite{doelcs}). 
\begin{itemize}
	\item[(1)] $A$ is equicontinuous on $(X, \tau)$ if and only if $A_\circ=\{x\in X: |f(x)|\leq 1 \text{ for every } f\in A\}$ is a neighborhood of $0$ in $(X, \tau)$.
	\item[(2)] If $A\subseteq B\subseteq X^*$ and $B$ is equicontinuous, then $A$ is also equicontinuous.
	\item[(3)] If  $(X, \parallel\cdot\parallel)$ is an enls, then  $A\subseteq X^*$ is equicontinuous if and only if there exists an $M>0$ such that $\parallel f\parallel_{op}\leq M$ for each $f\in A$, where $$\parallel f\parallel_{op}= \sup\{| f(x)|: \parallel x\parallel\leq 1\}.$$  In particular, the closed unit ball $B_{X^*}=\{f\in X^*: \parallel f\parallel_{op}\leq 1\}$ is always equicontinuous on $X$.\end{itemize}  

\begin{remark} If  $(X, \parallel\cdot\parallel)$ is an enls and $x\in X\setminus X_{fin}$, then for any $n\in\mathbb{N}$ there exists a linear functional $f_n\in B_{X^*}$ such that $f_n(x)=n$. So $B_{X^*}$ is not pointwise bounded. Hence in contrast to classical locally convex spaces, an equicontinuous family on an elcs may not be pointwise bounded.
\end{remark}


\begin{theorem}\label{exteneded seminorm crtiterion for equicontinuous family} Suppose $(X, \tau)$ and $(Y, \sigma)$ are extended locally convex spaces. Then $\mathcal{A}\subseteq \mathcal{L}(X, Y)$ is equicontinuous if and only if for every continuous extended seminorm $q$ on $Y$, there exists a continuous extended seminorm $p$ on $X$ such that $q(T(x))\leq p(x) \text{ for every } T\in\mathcal{A} \text{ and } x\in X$.   \end{theorem}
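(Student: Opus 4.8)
The plan is to prove both directions of the equivalence by reducing to the single‑operator criterion in Proposition~\ref{continuity conditions for a linear operator}. The backward direction (the seminorm estimate implies equicontinuity) is the routine one: given the family of estimates $q(T(x))\le p(x)$ with $p$ independent of $T$, I would take an arbitrary neighborhood $V$ of $0$ in $(Y,\sigma)$, shrink it to a basic one of the form $V=\{y:q(y)<\varepsilon\}$ for a continuous extended seminorm $q$, and produce $U=\{x:p(x)<\varepsilon\}$, which is a neighborhood of $0$ in $(X,\tau)$ since $p$ is continuous. The estimate then forces $T(U)\subseteq V$ simultaneously for all $T\in\mathcal{A}$, which is exactly equicontinuity.

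\medskip

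The forward direction is where the real work lies, and it is the step I expect to be the main obstacle. Assume $\mathcal{A}$ is equicontinuous and fix a continuous extended seminorm $q$ on $Y$. Applying equicontinuity to $V=\{y:q(y)<1\}$ yields a single neighborhood $U$ of $0$ in $(X,\tau)$ with $T(U)\subseteq V$ for every $T\in\mathcal{A}$; I may take $U$ absolutely convex, so $U=\{x:\rho_U(x)<1\}\subseteq\{x:\rho_U(x)\le 1\}$ controlled by the continuous extended seminorm $p:=\rho_U$. The natural candidate for the uniform bound is $p(x)=\rho_U(x)$, and the claim to verify is $q(T(x))\le p(x)$ for all $T\in\mathcal{A}$ and all $x\in X$.

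\medskip

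The delicate point is handling the extended‑valued nature of the seminorms, i.e.\ the possibility $p(x)=\infty$ or $q(T(x))=\infty$; when $p(x)=\infty$ the inequality is vacuous, so the substance is the case $p(x)=\rho_U(x)<\infty$. Here I would argue by homogeneity: for any $\lambda>p(x)$ we have $x\in\lambda U$, hence $\tfrac{1}{\lambda}x\in U$ and therefore $q(T(\tfrac{1}{\lambda}x))<1$, giving $q(T(x))<\lambda$ for every $\lambda>p(x)$ and thus $q(T(x))\le p(x)$, uniformly in $T\in\mathcal{A}$. This already proves the theorem with $p=\rho_U$, and since $\rho_U$ is a genuine continuous extended seminorm on $(X,\tau)$ by the Minkowski‑functional facts recorded after Definition~\ref{Minkowski funcitonal}, no further construction (such as the split‑sum seminorms used in Theorem~\ref{continuity of linear operator in finest and extended space}) is needed. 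The only care required is to ensure $p$ is continuous as an \emph{extended} seminorm, which holds precisely because $U$ is a neighborhood of $0$.
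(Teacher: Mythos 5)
Your proposal is correct and follows essentially the same route as the paper: both directions hinge on the Minkowski functional $p=\rho_U$ of an absolutely convex neighborhood $U$ with $T(U)\subseteq\{y:q(y)\le 1\}$ (resp.\ $<1$) and a homogeneity/scaling argument, with the converse obtained by passing from a basic neighborhood $\{y:q(y)<\varepsilon\}\subseteq V$ to $U=\{x:p(x)<\varepsilon\}$. The only blemish is the assertion $U=\{x:\rho_U(x)<1\}$, which should be the inclusion $\{x:\rho_U(x)<1\}\subseteq U\subseteq\{x:\rho_U(x)\le 1\}$; this does not affect your argument, since the step $\lambda>p(x)\Rightarrow x\in\lambda U$ follows directly from the definition of $\rho_U$ and the balancedness of $U$.
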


\begin{proof} Suppose $\mathcal{A}\subseteq \mathcal{L}(X, Y)$ is equicontinuous and $q$ is any continuous extended seminorm on $(Y, \sigma)$. Then there exists an absolutely convex neighborhood $U$ of $0$ in $(X, \tau)$ such that $T(U)\subseteq q^{-1}\left( [0,1]\right) $ for every $T\in\mathcal{A}$. Let $p=\rho_U$, where $\rho_{U}$ is the Minkowski functional on $X$ corresponding to $U$. Then $\rho_U$ is a continuous extended seminorm on $(X, \tau)$. It is easy to see that if $p(x)<\infty$, then $$\frac{x}{p(x)+\epsilon}\in p^{-1}\left( [0, 1)\right) \subseteq U\subseteq T^{-1}\left( q^{-1}\left( [0,1]\right) \right) $$ for every $\epsilon>0$.   Therefore $q(T(x))\leq p(x)$ for every $x\in X$ and $T\in\mathcal{A}$.       
	
Conversely, suppose $\mathcal{A}\subseteq \mathcal{L}(X, Y)$ has the given property. If $V$ is a neighborhood of $0$ in $(Y, \sigma)$, then there exists a continuous extended seminorm $q$ on  $(Y, \sigma)$ such that $q^{-1}([0, 1))\subseteq V$. By our assumption, there exists a continuous extended seminorm $p$ on $(X, \tau)$ such that $q(T(x))\leq p(x)$ for every $x\in X$ and $T\in\mathcal{A}$. It is easy to see that $U=p^{-1}([0, 1))$ is a neighborhood of $0$ in $(X, \tau)$ and  that $T(U)\subseteq V$ for every $T\in\mathcal{A}$. \end{proof}

\begin{corollary}\label{equicontinuity condition in finite subspace} Let $(X, \tau)$ and $(Y, \sigma)$ be two extended locally convex spaces. Suppose $\mathcal{A}\subseteq \mathcal{L}(X, Y)$ is equicontinuous. Then for every continuous extended seminorm $q$ on $(Y, \sigma)$, there exists a continuous extended seminorm $p$ on $(X, \tau)$ such that $$\bigcap_{T\in \mathcal{A}}T^{-1}\left( Y_{fin}^q\right)\supseteq X_{fin}^p.$$\end{corollary}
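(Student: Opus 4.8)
The plan is to derive this corollary as a direct consequence of Theorem \ref{exteneded seminorm crtiterion for equicontinuous family}. Since $\mathcal{A}$ is equicontinuous, that theorem supplies, for each continuous extended seminorm $q$ on $(Y,\sigma)$, a single continuous extended seminorm $p$ on $(X,\tau)$ satisfying $q(T(x))\leq p(x)$ for every $T\in\mathcal{A}$ and every $x\in X$. I would take exactly this $p$ and verify that it witnesses the claimed inclusion of finite subspaces.

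The key step is a pointwise set-containment argument. Fix the $q$ and $p$ as above, and let $x\in X_{fin}^p$, so that $p(x)<\infty$. For any $T\in\mathcal{A}$, the inequality $q(T(x))\leq p(x)<\infty$ forces $T(x)\in Y_{fin}^q$, which is precisely the statement $x\in T^{-1}\left(Y_{fin}^q\right)$. Since this holds for every $T\in\mathcal{A}$ simultaneously (the bound $p$ being uniform in $T$), we conclude $x\in\bigcap_{T\in\mathcal{A}}T^{-1}\left(Y_{fin}^q\right)$. As $x$ was an arbitrary element of $X_{fin}^p$, this establishes $X_{fin}^p\subseteq\bigcap_{T\in\mathcal{A}}T^{-1}\left(Y_{fin}^q\right)$, as desired.

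There is essentially no obstacle here: the entire content of the corollary is already packaged into the uniform seminorm bound of the preceding theorem, and the only work is to translate the inequality $q(T(x))\leq p(x)$ into the inclusion of inverse images. The one point worth noting—and what makes the intersection over all $T\in\mathcal{A}$ legitimate rather than requiring a separate $p$ for each operator—is precisely that Theorem \ref{exteneded seminorm crtiterion for equicontinuous family} produces a \emph{single} $p$ dominating the whole family at once. I would state the proof in a single short paragraph, making explicit that $q(T(x))\leq p(x)<\infty$ yields $T(x)\in Y_{fin}^q$ for all $T\in\mathcal{A}$, and hence the stated containment.
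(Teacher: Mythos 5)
Your proof is correct and is exactly the intended argument: the paper states this as an immediate corollary of Theorem \ref{exteneded seminorm crtiterion for equicontinuous family} without writing out a proof, and your derivation---using the single uniform seminorm $p$ from that theorem and translating $q(T(x))\leq p(x)<\infty$ into $T(x)\in Y_{fin}^q$ for all $T\in\mathcal{A}$---is precisely the step the authors leave to the reader.
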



\begin{theorem}\label{Uniform bounded principle}{\normalfont\textbf{(Uniform Boundedness Principle)}} Let $(X, \tau)$ be a barreled elcs, and let $(Y, \sigma)$ be any elcs. Suppose $\mathcal{A}\subseteq \mathcal{L}(X,Y)$ is pointwise bounded and for every continuous extended seminorm $q$ on $(Y, \sigma)$, there exists a continuous extended seminorm  $p$ on  $(X, \tau)$ such that $T^{-1}\left( Y_{fin}^q\right)\supseteq X_{fin}^p  \text{ for all } T\in \mathcal{A}$. Then $\mathcal{A}$ is an equicontinuous family in $\mathcal{L}(X, Y)$.\end{theorem}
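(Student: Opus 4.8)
The plan is to verify equicontinuity through the seminorm criterion of Theorem \ref{exteneded seminorm crtiterion for equicontinuous family}: it suffices to produce, for each continuous extended seminorm $q$ on $(Y,\sigma)$, a continuous extended seminorm $p'$ on $(X,\tau)$ with $q(T(x))\le p'(x)$ for all $T\in\mathcal{A}$ and $x\in X$. I would fix such a $q$ and let $p$ be the continuous extended seminorm supplied by the hypothesis, so that $T^{-1}(Y_{fin}^q)\supseteq X_{fin}^p$ for every $T\in\mathcal{A}$; equivalently, $T(x)\in Y_{fin}^q$ whenever $x\in X_{fin}^p$.

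On the clopen subspace $X_{fin}^p$ I would define $P_0(x)=\sup_{T\in\mathcal{A}}q(T(x))$ and argue that $P_0$ is a genuine finite-valued seminorm there. Finiteness is the crux: for $x\in X_{fin}^p$ the orbit $B=\{T(x):T\in\mathcal{A}\}$ lies in $Y_{fin}^q$ and is bounded by pointwise boundedness; writing $B\subseteq F+rU$ with $U=\{y\in Y:q(y)<1\}$, a finite set $F\subseteq B$, and $r>0$ (part (2) of the proposition following Definition \ref{bounded set in an elcs} lets one take $F$ inside $B$), every element of $F$ then sits in $Y_{fin}^q$, whence $\sup_{b\in B}q(b)\le\max_{f\in F}q(f)+r<\infty$. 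Since each map $x\mapsto q(T(x))$ is continuous and finite on $X_{fin}^p$, the supremum $P_0$ is lower semi-continuous there.

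Because $(X,\tau)$ is barreled, Corollary \ref{barreledness of finite subspace of an elcs X} makes $X_{fin}^p$ a barreled elcs, and Theorem \ref{barreledness of flc topology} (implication $(1)\Rightarrow(4)$, applied to $X_{fin}^p$) then forces the lower semi-continuous seminorm $P_0$ to be continuous on $X_{fin}^p$. Finally, fixing a decomposition $X=X_{fin}^p\oplus M$, I would set $p'(x)=P_0(x_f)+p(x_m)$ for $x=x_f+x_m$ with $x_f\in X_{fin}^p$ and $x_m\in M$. This is an extended seminorm dominating $q(T(\cdot))$: if $x_m\neq 0$ then $p(x_m)=\infty$ since $M\cap X_{fin}^p=\{0\}$, so $p'(x)=\infty\ge q(T(x))$, while if $x_m=0$ then $q(T(x))\le P_0(x)=p'(x)$. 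Continuity of $p'$ follows from continuity of $P_0$ on the open subspace $X_{fin}^p$: given $\epsilon>0$, a $\tau|_{X_{fin}^p}$-neighborhood of $0$ on which $P_0<\epsilon$ is already a $\tau$-neighborhood of $0$ contained in $\{x\in X:p'(x)<\epsilon\}$, so $p'$ is continuous at $0$ and hence everywhere. Theorem \ref{exteneded seminorm crtiterion for equicontinuous family} then yields the equicontinuity of $\mathcal{A}$.

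I expect the main obstacle to be the finiteness step: translating the elcs notion of pointwise boundedness into the scalar bound $\sup_{T}q(T(x))<\infty$ genuinely requires \emph{both} the hypothesis $T^{-1}(Y_{fin}^q)\supseteq X_{fin}^p$, which keeps the whole orbit inside $Y_{fin}^q$, and the sharpened boundedness characterization allowing the finite set $F$ to be chosen within the orbit itself. Without the first, the finitely many base points $f\in F$ could satisfy $q(f)=\infty$ and the estimate collapses; this is exactly the phenomenon flagged in the remark preceding the theorem, where an equicontinuous family need not be pointwise bounded, so the two hypotheses are not redundant.
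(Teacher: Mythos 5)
Your proof is correct, but it takes a genuinely different route from the paper's. The paper proves this theorem by transfer to the associated finest locally convex spaces: by Theorem \ref{continuity of linear operator in finest and extended space} each $T\in\mathcal{A}$ is continuous from $(X,\tau_F)$ to $(Y,\sigma_F)$, by Theorem \ref{barreledness of flc topology} the space $(X,\tau_F)$ is barreled, so the classical uniform boundedness principle yields equicontinuity of $\mathcal{A}$ in $\mathcal{L}((X,\tau_F),(Y,\sigma_F))$; the resulting classical estimate $q_1(T(x))\le p_1(x)$ (where $q_1$ is the finite part of $q$ relative to a decomposition $Y=Y_{fin}^q\oplus N$) is then upgraded to $q(T(x))\le p(x)+p_1(x)$ using the containment hypothesis. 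You never pass to $\sigma_F$ and never invoke the classical UBP; instead you run the classical supremum argument directly in the extended setting: $P_0=\sup_{T\in\mathcal{A}}q\circ T$ is a finite, lower semi-continuous seminorm on the clopen subspace $X_{fin}^p$ --- your finiteness argument, which combines the containment hypothesis (so each orbit stays inside $Y_{fin}^q$) with part (2) of the proposition following Definition \ref{bounded set in an elcs} (so the finite set $F$ can be taken inside the orbit and hence has finite $q$-values), is indeed the crux and is handled correctly --- and barreledness enters through Corollary \ref{barreledness of finite subspace of an elcs X} together with Theorem \ref{barreledness of flc topology}, implication (1)$\Rightarrow$(4), applied to the barreled elcs $X_{fin}^p$, making $P_0$ continuous there. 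Your extension $p'(x_f+x_m)=P_0(x_f)+p(x_m)$, which dominates each $q\circ T$ because $p\equiv\infty$ on $M\setminus\{0\}$, mirrors the paper's own trick of adding $p$ to a finite seminorm, and Theorem \ref{exteneded seminorm crtiterion for equicontinuous family} closes the argument in both proofs. What the paper's route buys is economy: it reuses the flc transfer machinery and cites the classical results wholesale. What your route buys is self-containedness and transparency: it shows exactly where each hypothesis acts --- pointwise boundedness for finiteness of the supremum, the containment $T^{-1}(Y_{fin}^q)\supseteq X_{fin}^p$ for keeping orbits where $q$ is finite, and barreledness for promoting lower semi-continuity to continuity --- which also explains, as you observe, why neither hypothesis is redundant.
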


\begin{proof} Let $\mathcal{L}(X^F, Y^F)$ be the collection of all continuous linear maps from $(X, \tau_{F})$ to $(Y, \sigma_{F})$, where $\tau_{F}$ and $\sigma_{F}$ are the flc topologies for $(X, \tau)$ and $(Y, \sigma)$, respectively. By Theorem \ref{continuity of linear operator in finest and extended space}, $\mathcal{A}\subseteq \mathcal{L}(X^F,Y^F)$. Therefore $\mathcal{A}$ is a pointwise bounded subset of $\mathcal{L}(X^F,Y^F)$ as $\sigma_{F}\subseteq \sigma$. Since $(X, \tau)$ is barreled, by Theorem \ref{barreledness of flc topology}, $(X, \tau_{F})$ is barreled. Thus by Theorem 11.9.1, p. 400 in \cite{tvsnarici}, $\mathcal{A}$ is an equicontinuous family in $\mathcal{L}(X^F, Y^F)$. Suppose $q$ is a continuous extended seminorm on $(Y, \sigma)$. Then there exists a subspace $N$ of $Y$ such that $Y= Y_{fin}^q\oplus N$. Consider the seminorm $q_1$ defined by $$q_1(y=y_f+y_n)=q(y_f)$$ for $y_f\in Y_{fin}^q$ and $y_n\in N$. Since $q_1(y)\leq q(y)$ for every $y\in Y$, $q_1$ is a continuous seminorm on $(Y, \sigma)$. By Theorem 3.5 in \cite{flctopology}, $q_1$ is a continuous seminorm on $(Y, \sigma_{F})$. Since $\mathcal{A}$ is an equicontinuous family in $\mathcal{L}(X^F, Y^F)$,  there exists a continuous seminorm $p_1$ on $(X, \tau_{F})$ such that $q_1(T(x))\leq p_1(x)$ for every $T\in \mathcal{A} \text{ and } x\in X$ (see, Theorem 8.6.3, p. 245 in \cite{tvsnarici}). Also, by our assumption, there exists a continuous extended seminorm $p$ on $(X, \tau)$ such that $T^{-1}\left( Y_{fin}^q\right)\supseteq X_{fin}^p  \text{ for all } T\in \mathcal{A}$. It is easy to see that $\mu(x)=p(x)+p_1(x)$ for $x\in X$ is a continuous extended seminorm on $(X, \tau)$ and that $q(T(x))\leq \mu(x)$ for every $x\in X$. Hence by Theorem \ref{exteneded seminorm crtiterion for equicontinuous family}, $\mathcal{A}$ is an equicontinuous family in $\mathcal{L}(X, Y)$.    	 \end{proof}

\begin{corollary}\label{equicontinuity in fundamental elcs} Let $(X, \tau)$ be a barreled fundamental elcs, and let $(Y, \sigma)$ be an elcs. Suppose $\mathcal{A}\subseteq \mathcal{L}(X,Y)$ is pointwise bounded. Then $\mathcal{A}$ is an equicontinuous family in $\mathcal{L}(X, Y)$.
\end{corollary}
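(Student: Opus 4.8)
The plan is to derive this corollary directly from the Uniform Boundedness Principle (Theorem \ref{Uniform bounded principle}) by showing that its extra hypothesis---namely that for each continuous extended seminorm $q$ on $(Y,\sigma)$ there is a \emph{single} continuous extended seminorm $p$ on $(X,\tau)$ with $T^{-1}(Y_{fin}^q)\supseteq X_{fin}^p$ for every $T\in\mathcal{A}$---comes for free once $(X,\tau)$ is fundamental. Since $(X,\tau)$ is a fundamental elcs, the characterization of fundamental spaces recorded in Section 2 (property (6)) supplies a continuous extended seminorm $\rho$ on $X$ with $X_{fin}=X_{fin}^\rho$. My candidate for $p$ is this single $\rho$, which crucially depends neither on $T$ nor on $q$.

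The key step is to verify that $X_{fin}\subseteq T^{-1}(Y_{fin}^q)$ for every $T\in\mathcal{A}$ and every continuous extended seminorm $q$ on $(Y,\sigma)$. Fix such a $T$ and $q$. Because each $T\in\mathcal{A}$ is continuous, Proposition \ref{continuity conditions for a linear operator} furnishes a continuous extended seminorm $p_T$ on $(X,\tau)$ with $q(T(x))\le p_T(x)$ for all $x\in X$. If $x\in X_{fin}$, then $p_T(x)<\infty$ by the very definition of $X_{fin}$, so $q(T(x))<\infty$, that is, $T(x)\in Y_{fin}^q$. Hence $T(X_{fin})\subseteq Y_{fin}^q$, which is exactly $X_{fin}\subseteq T^{-1}(Y_{fin}^q)$. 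Since $X_{fin}^\rho=X_{fin}$, this yields $X_{fin}^\rho\subseteq T^{-1}(Y_{fin}^q)$ for every $T\in\mathcal{A}$ simultaneously, with the same $\rho$ for all $T$.

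With $p=\rho$ all the hypotheses of Theorem \ref{Uniform bounded principle} are in place: $(X,\tau)$ is barreled, $\mathcal{A}$ is pointwise bounded, and the seminorm condition just verified holds. Invoking that theorem then concludes that $\mathcal{A}$ is equicontinuous in $\mathcal{L}(X,Y)$. I do not anticipate a serious obstacle here; the only point to watch is that the seminorm $\rho$ supplied by fundamentality works uniformly over all $T\in\mathcal{A}$ and all $q$, which is precisely the uniformity in $T$ demanded by Theorem \ref{Uniform bounded principle}. In a non-fundamental space the dominating seminorm $p_T$ would genuinely depend on $T$, and no common $p$ need exist, so fundamentality is exactly what lets us discharge the extra hypothesis.
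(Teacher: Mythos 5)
Your proposal is correct, and its overall strategy is the same as the paper's: use fundamentality to produce a single continuous extended seminorm $\rho$ with $X_{fin}^\rho=X_{fin}$, verify the inclusion hypothesis of Theorem \ref{Uniform bounded principle} with this one $\rho$ uniformly over $\mathcal{A}$ and over all $q$, and then invoke that theorem. Where you differ is in how the key inclusion $T(X_{fin})\subseteq Y_{fin}^q$ is established. The paper argues topologically: $X_{fin}$ and $Y_{fin}$ are the connected components of zero in $(X,\tau)$ and $(Y,\sigma)$, so continuity of $T$ forces $T(X_{fin})\subseteq Y_{fin}\subseteq Y_{fin}^q$. You argue analytically: by Proposition \ref{continuity conditions for a linear operator}, $q\circ T$ is dominated by some continuous extended seminorm $p_T$ on $(X,\tau)$, and since every continuous extended seminorm is finite on $X_{fin}$ by definition, $q(T(x))\le p_T(x)<\infty$ for $x\in X_{fin}$. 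Your route is slightly more self-contained, as it uses only the definition of $X_{fin}$ and a proposition stated explicitly in the paper, whereas the paper's proof leans on the fact (imported from the literature, not listed among the preliminaries) that the finite subspace is the connected component of the origin; the paper's route, given that fact, is shorter and also yields the slightly stronger conclusion $T(X_{fin})\subseteq Y_{fin}$ rather than just $T(X_{fin})\subseteq Y_{fin}^q$ for each $q$ separately. One small caveat on your closing remark: in a non-fundamental space the obstruction is not that the dominating seminorms $p_T$ depend on $T$ (the inclusion $X_{fin}\subseteq T^{-1}(Y_{fin}^q)$ holds for every continuous $T$ regardless), but that $X_{fin}$ need not contain, hence need not equal, any set of the form $X_{fin}^p$, so no single seminorm can witness the hypothesis.
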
	
\begin{proof} Suppose $p$ is a continuous extended seminorm on $(X, \tau)$ such that $X_{fin}^p=X_{fin}$. Note that $X_{fin}$ and $Y_{fin}$ are the connected components of zero in $(X, \tau)$ and $(Y, \sigma)$, respectively. Therefore $T(X_{fin})\subseteq Y_{fin}$ for every $T\in \mathcal{A}$. Since $Y_{fin}\subseteq Y_{fin}^q$ for every continuous extended seminorm $q$ on $(Y, \sigma)$, we have $$X_{fin}^p\subseteq T^{-1}\left( Y_{fin}^q\right)$$ for every continuous extended seminorm $q$ on $(Y, \sigma)$. Hence by  Theorem \ref{Uniform bounded principle},  $\mathcal{A}$ is an equicontinuous family in $\mathcal{L}(X, Y)$.
\end{proof}

\begin{corollary}Let $(X, \parallel\cdot\parallel)$ be an extended Banach space, and let $(Y, \sigma)$ be an elcs. Suppose $\mathcal{A}\subseteq \mathcal{L}(X,Y)$ is pointwise bounded. Then $\mathcal{A}$ is an equicontinuous family in $\mathcal{L}(X, Y)$. Moreover, there exists an $M>0$ such that $\parallel T\parallel_{op}\leq M$ for every $T\in \mathcal{A}$. 
\end{corollary}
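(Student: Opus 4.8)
The plan is to recognize $(X,\|\cdot\|)$ as a barreled fundamental elcs and then invoke Corollary \ref{equicontinuity in fundamental elcs}. First I would note that an extended Banach space is barreled by Corollary \ref{barreledness of an extended Banach space}. Next, since the topology of $(X,\|\cdot\|)$ is induced by the single extended norm $\|\cdot\|$, the identity $X_{fin}=\bigcap_{\rho\in\mathcal{P}}X_{fin}^{\rho}$ with $\mathcal{P}=\{\|\cdot\|\}$ gives $X_{fin}=X_{fin}^{\|\cdot\|}$; as $\|\cdot\|$ is itself a continuous extended seminorm, fact (6) of Section 2 shows that $(X,\|\cdot\|)$ is a fundamental elcs. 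Thus $(X,\|\cdot\|)$ is a barreled fundamental elcs, $(Y,\sigma)$ is an elcs, and $\mathcal{A}$ is pointwise bounded, so Corollary \ref{equicontinuity in fundamental elcs} applies directly and yields that $\mathcal{A}$ is equicontinuous in $\mathcal{L}(X,Y)$.

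For the uniform operator-norm bound I would pass from equicontinuity to an extended-seminorm inequality and then exploit that the domain carries a single extended norm. By Theorem \ref{exteneded seminorm crtiterion for equicontinuous family}, for the relevant continuous extended seminorm $q$ on $Y$ (the extended norm of $Y$ in the enls case) there is a continuous extended seminorm $p$ on $(X,\|\cdot\|)$ with $q(Tx)\le p(x)$ for every $T\in\mathcal{A}$ and $x\in X$, the seminorm $p$ being \emph{independent of} $T$. Since the topology of $X$ comes from the single extended norm, continuity of $p$ at $0$ provides an $r>0$ with $\{x:\|x\|<r\}\subseteq\{x:p(x)<1\}$, and homogeneity together with the conventions for $\infty$ then gives $p(x)\le \frac{2}{r}\,\|x\|$ for every $x\in X$. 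Writing $M=\frac{2}{r}$, we obtain $q(Tx)\le M\|x\|$ for all $T\in\mathcal{A}$ and $x\in X$, and taking the supremum over the unit ball $\{\|x\|\le 1\}$ yields $\|T\|_{op}\le M$ uniformly in $T\in\mathcal{A}$.

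The routine parts are the two reductions above; the point that deserves care is the passage to the operator-norm bound. The subtlety is that, by the Remark preceding Theorem \ref{exteneded seminorm crtiterion for equicontinuous family}, an equicontinuous family on an elcs need not be pointwise bounded, so the uniform estimate cannot be extracted pointwise from $\mathcal{A}$; it must instead come from the single, $T$-independent dominating seminorm $p$ supplied by Theorem \ref{exteneded seminorm crtiterion for equicontinuous family}. The only genuine verification is therefore the domination $p\le M\|\cdot\|$, where one handles the vectors of infinite norm (for which the inequality holds trivially under the stated conventions) separately from the finite part $X_{fin}$, on which the argument is the classical continuity-at-zero estimate.
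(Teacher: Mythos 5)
Your proposal is correct and follows exactly the route the paper intends: the paper states this corollary without proof, as an immediate consequence of Corollary \ref{equicontinuity in fundamental elcs} (an extended Banach space being a barreled fundamental elcs, by Corollary \ref{barreledness of an extended Banach space} and facts (3) and (6) of Section 2), with the uniform bound $\|T\|_{op}\leq M$ extracted from the $T$-independent dominating seminorm furnished by Theorem \ref{exteneded seminorm crtiterion for equicontinuous family}. Your verification that $p\leq\frac{2}{r}\|\cdot\|$, including the infinite-norm vectors via the stated conventions, supplies precisely the details the paper leaves implicit.
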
	

\begin{theorem} Suppose $(X,\tau)$ is an elcs with flc topology $\tau_F$. Then $(X,\tau)$ is barreled if and only if every pointwise bounded subset $\mathcal{A}$ of $X^*$ is equicontinuous on $(X,\tau)$.\end{theorem}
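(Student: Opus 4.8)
The plan is to prove the equivalence by routing everything through the associated finest locally convex space $(X,\tau_F)$ and invoking the classical characterization of barreledness for ordinary locally convex spaces. The backbone is the chain
\begin{align*}
(X,\tau)\text{ barreled} &\iff (X,\tau_F)\text{ barreled}\\
&\iff \text{every pointwise bounded }\mathcal{A}\subseteq X^*\text{ is }\tau_F\text{-equicontinuous}\\
&\iff \text{every such }\mathcal{A}\text{ is }\tau\text{-equicontinuous}.
\end{align*}
The first equivalence is exactly Theorem \ref{barreledness of flc topology}. For the second, I would use that $(X,\tau)$ and $(X,\tau_F)$ share the same dual $X^*$, so that a subset of $X^*$ is $\sigma(X^*,X)$-bounded precisely when it is pointwise bounded, together with the standard fact that a locally convex space is barreled if and only if every $\sigma(X^*,X)$-bounded (equivalently, weakly bounded) subset of its dual is equicontinuous (see \cite{tvsnarici}).

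The decisive step is the last equivalence, transferring equicontinuity between $\tau$ and $\tau_F$ for a pointwise bounded family. Given pointwise bounded $\mathcal{A}\subseteq X^*$, I would form the seminorm $q=\sup_{f\in\mathcal{A}}|f|$; pointwise boundedness is exactly the statement that $q(x)<\infty$ for every $x$, so $q\colon X\to[0,\infty)$ is a genuine seminorm and its unit ball is the polar $\mathcal{A}_\circ=\{x:q(x)\le 1\}$, with $q=\rho_{\mathcal{A}_\circ}$. Since $\mathcal{A}$ is equicontinuous precisely when $\mathcal{A}_\circ$ is a neighborhood of $0$, and since the Minkowski functional of a neighborhood is continuous, $\mathcal{A}$ is $\tau$-equicontinuous iff $q$ is $\tau$-continuous, and likewise with $\tau$ replaced by $\tau_F$. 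By Theorem 3.5 in \cite{flctopology}, $(X,\tau)$ and $(X,\tau_F)$ have the same continuous seminorms, so $q$ is $\tau$-continuous iff it is $\tau_F$-continuous; this closes the chain and yields the theorem.

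I expect the main obstacle to be precisely this last transfer, and it is worth recording why a more direct route stumbles. One is tempted to prove the ``$\Leftarrow$'' implication using criterion (4) of Theorem \ref{barreledness of flc topology}: take a lower semi-continuous seminorm $P$ on $(X,\tau)$, set $\mathcal{A}=\{f\in X^*:|f|\le P\}$, observe that $\mathcal{A}$ is pointwise bounded and hence equicontinuous by hypothesis, and hope that $P=\sup_{f\in\mathcal{A}}|f|$. The difficulty is that Hahn--Banach only produces a dominated linear functional $f$ with $|f|\le P$, and such an $f$ need not be $\tau$-continuous when $P$ is merely lower semi-continuous; thus $\sup_{f\in\mathcal{A}}|f|$ may fall strictly below $P$. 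This gap is exactly what barreledness repairs, which is why I prefer to borrow the corresponding fact for $(X,\tau_F)$ from classical theory rather than argue it directly.

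Finally, for the ``$\Rightarrow$'' implication there is an alternative, self-contained derivation that I would also record: apply the Uniform Boundedness Principle (Theorem \ref{Uniform bounded principle}) with $(Y,\sigma)$ equal to the scalar field $\mathbb{K}$ in its usual topology. Since $\mathbb{K}$ is finite dimensional, $Y_{fin}^q=\mathbb{K}$ for every continuous seminorm $q$ on $\mathbb{K}$, so $T^{-1}(Y_{fin}^q)=X\supseteq X_{fin}^p$ holds trivially for any continuous extended seminorm $p$ on $X$; the hypothesis of Theorem \ref{Uniform bounded principle} is therefore automatic, and a pointwise bounded $\mathcal{A}\subseteq X^*=\mathcal{L}(X,\mathbb{K})$ is immediately equicontinuous.
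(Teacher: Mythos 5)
Your proof is correct, and it runs on the same machinery as the paper's: reduction to the finest space $(X,\tau_F)$ via Theorem \ref{barreledness of flc topology}, the coincidence of continuous seminorms for $\tau$ and $\tau_F$ (Theorem 3.5 of \cite{flctopology}), polars and Minkowski functionals, and classical barreledness theory on $(X,\tau_F)$. The difference is purely organizational, namely where the bipolar theorem lives. The paper's converse takes a barrel $B$ in $(X,\tau_F)$, notes that $B^\circ$ is pointwise bounded and hence $\tau$-equicontinuous by hypothesis, concludes via the bipolar theorem that $B=(B^\circ)_\circ$ is a $\tau$-neighborhood of $0$, and then transfers to $\tau_F$ through continuity of $\rho_B$; you instead isolate a reusable two-way transfer lemma --- for pointwise bounded $\mathcal{A}$, $\tau$-equicontinuity and $\tau_F$-equicontinuity are both equivalent to continuity of the finite seminorm $q=\sup_{f\in\mathcal{A}}|f|=\rho_{\mathcal{A}_\circ}$ --- and then quote the full classical equivalence (barreled iff every $\sigma(X^*,X)$-bounded set is equicontinuous) as a black box, so the bipolar step is hidden inside that citation. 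In effect the paper's converse is your transfer lemma specialized to $\mathcal{A}=B^\circ$ together with an explicit bipolar application, so each proof buys the same thing at the same price; your version has the small advantage of making the transfer principle available for reuse, and your ``alternative'' forward route via Theorem \ref{Uniform bounded principle} with $Y=\mathbb{K}$ is exactly the paper's forward proof. One small correction: $Y_{fin}^q=\mathbb{K}$ for every continuous extended seminorm $q$ on $\mathbb{K}$ not because $\mathbb{K}$ is finite dimensional, but because $Y_{fin}^q$ is a clopen subspace and $\mathbb{K}$ with its usual topology is connected; finite-dimensionality alone would not suffice, since the discrete extended norm on $\mathbb{K}$ has finite subspace $\{0\}$.
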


\begin{proof} Suppose $(X, \tau)$ is barreled. Then by Theorem \ref{Uniform bounded principle}, every pointwise bounded subset $\mathcal{A}$ of $X^*$ is equicontinuous on $(X,\tau)$.
	
	Conversely, let $B$ be a barrel in $(X, \tau_F)$. Then $B^\circ= \{f\in X^*: |f(x)|\leq 1 \text{ for } x\in B\}$ is pointwise bounded. By our assumptions, $B^\circ$ is equicontinuous on $(X, \tau)$. Therefore $(B^\circ)_\circ=\{x\in X: |f(x)|\leq 1 \text{ for} f\in B^\circ\}$ is a neighborhood of $0$ in $(X, \tau)$. Since $B$ is a barrel in $(X, \tau_F)$, by Bipolar Theorem (Theorem 8.3.8, p. 235 in \cite{tvsnarici}), $B=(B^\circ)_\circ$. Which implies that $B$ is a neighborhood of $0$ in  $(X, \tau)$. Consequently, the Minkowski functional $\rho_B$ is a continuous seminorm on $(X, \tau)$. By Theorem 3.4 in \cite{flctopology}, $\rho_B$ is a continuous on $(X, \tau_F)$. Therefore $B$ is a neighborhood of $0$ in $(X, \tau_F)$. Hence $(X, \tau_F)$ is barreled. Consequently, by Theorem \ref{barreledness of flc topology}, $(X, \tau)$ is barreled. \end{proof}

\bibliographystyle{plain}
\bibliography{reference_file}		
\end{document}